\documentclass[a4paper,UKenglish,cleveref, autoref,nolineno, thm-restate]{socg-lipics-v2019}

\usepackage{mathtools}
\usepackage{latexsym,color,graphicx,amsmath,amssymb,amsthm}
\usepackage{babel}

\def\reals{{\mathbb R}}

\def\Inov{{I_{\rm nov}}} 
\def\Inovp{{I'_{\rm nov}}} 
\def\A{{\cal A}}

\bibliographystyle{plainurl}

\title{On rich lenses in planar arrangements of circles and related problems}

\author{Esther Ezra}%
{School of Computer Science, Bar Ilan University, Ramat Gan, Israel}%
{ezraest@cs.biu.ac.il}%
{https://orcid.org/0000-0001-8133-1335}%
{Work partially supported by NSF CAREER under grant CCF:AF-1553354 %
and by Grant 824/17 from the Israel Science Foundation.}

\author{Orit E. Raz}
{Institute of Mathematics, Hebrew University, Jerusalem, Israel}
{oritraz@mail.huji.ac.il}
{https://orcid.org/0000-0002-2910-436X}%
{}

\author{Micha Sharir}
{School of Computer Science, Tel Aviv University, Tel Aviv Israel}
{michas@tauex.tau.ac.il}
{http://orcid.org/0000-0002-2541-3763}
{Work partially supported by ISF Grant 260/18, by grant 1367/2016
from the German-Israeli Science Foundation (GIF), and by
Blavatnik Research Fund in Computer Science at Tel Aviv University.}

\author{Joshua Zahl}
{Department of Mathematics, University of British Columbia, Vancouver, B.C., Canada}
{jzahl@math.ubc.ca}
{http://orcid.org/0000-0001-5129-8300}
{Work supported by an NSERC Discovery Grant.}

\titlerunning{On rich lenses in arrangements of circles} 

\authorrunning{E. Ezra, O.~E. Raz, M. Sharir and J. Zahl} 

\Copyright{Esther Ezra and Orit E. Raz and Micha Sharir and Joshua Zahl} 

\ccsdesc[500]{Theory of computation~Computational geometry}

\keywords{Lenses, Circles, Polynomial partitioning, Incidences} 

\category{} 

\relatedversion{} 

\supplement{}

\hideLIPIcs  

\nolinenumbers 

\EventEditors{John Q. Open and Joan R. Access}
\EventNoEds{2}
\EventLongTitle{42nd Conference on Very Important Topics (CVIT 2016)}
\EventShortTitle{SoCG 2021}
\EventAcronym{SoCG}
\EventYear{2021}
\EventDate{December 24--27, 2016}
\EventLocation{Little Whinging, United Kingdom}
\EventLogo{}
\SeriesVolume{42}
\ArticleNo{XX}

\begin{document}
 
\maketitle

\begin{abstract}
We show that the maximum number of pairwise non-overlapping $k$-rich lenses 
(lenses formed by at least $k$ circles) in an arrangement of $n$ circles in 
the plane is $O\left(\frac{n^{3/2}\log{(n/k^3)}}{k^{5/2}} + \frac{n}{k} \right)$, 
and the sum of the degrees of the lenses of such a family 
(where the degree of a lens is the number of circles that form it)
is $O\left(\frac{n^{3/2}\log{(n/k^3)}}{k^{3/2}} + n\right)$. 
Two independent proofs of these bounds are given, each interesting in its 
own right (so we believe).
We then show that these bounds lead to the known bound of 
\cite{ANPPSS,MT} on the number of point-circle incidences in the plane. 
Extensions to families of more general algebraic curves and some other related
problems are also considered.
\end{abstract}

\section{Introduction}

Let $C$ be a set of circles in the plane.
A \emph{lens} in the arrangement $\A(C)$ consists of a pair of distinct points $p,q$ and a set of circles $C'\subset C$, each of which contain
$p$ and $q$. We will denote a lens by $\lambda_{p,q}(C')$. We say that two lenses $\lambda_{p,q}(C')$ and $\lambda_{s,t}(C'')$ are 
\emph{overlapping} if there is a circle $c\in C'\cap C''$ so that the shorter arc of $c$ 
containing $p$ and $q$ intersects the shorter arc of $c$ containing $s$ and $t$.\footnote{%
  This definition requires a small modification if either $p,q$ or $s,t$ are antipodal points of $c$; if $p,q$ are antipodal points
  of $c$, then overlapping means that $s$ and $t$ are contained in opposite half-circles.} 
If two lenses are not overlapping, we call them \emph{non-overlapping}. 
Finally, the \emph{degree} of a lens $\lambda_{p,q}(C')$ is the cardinality of $C'$, 
and we say a lens is \emph{$k$-rich} if it has degree at least $k$.

In this paper, we will be concerned with bounding the maximum size of a collection of pairwise non-overlapping 
$k$-rich lenses determined by a set of $n$ circles in the plane. As we will see below, this question is closely 
related to the problem of \emph{lens cutting}, which has a host of applications in combinatorial geometry; 
chief among these is the problem of obtaining incidence bounds for points and circles in the plane. 

In \cite{MT} (sharpening a bound earlier obtained in \cite{ANPPSS,ArS:lens}) Marcus and Tardos proved 
that if $C$ is a set of $n$ circles, then any set of pairwise non-overlapping 2-rich lenses in $C$ has cardinality 
$O(n^{3/2}\log n)$. Using standard random sampling techniques, this implies that any set of pairwise 
non-overlapping $k$-rich lenses has cardinality $O\big(\frac{n^{3/2}\log(n/k)}{k^{3/2}}\big)$. 
Our main result considerably improves this bound. 

\begin{theorem} \label{krich52-lens}
Let $C$ be a set of $n$ circles in the plane, let $k\geq 2$, and let $\Lambda$ be a set of pairwise 
non-overlapping $k$-rich lenses. Then $|\Lambda|=O\left(\frac{n^{3/2}\log{(n/k^3)}}{k^{5/2}} + \frac{n}{k} \right)$, 
and the sum of the degrees of the lenses in $\Lambda$ is 
$O\left(\frac{n^{3/2}\log{(n/k^3)}}{k^{3/2}} + n \right)$. 
\end{theorem}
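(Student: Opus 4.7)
The key quantity is the sum of degrees $S:=\sum_{\lambda\in\Lambda}|\lambda|$; since every lens in $\Lambda$ has degree at least $k$, one has $|\Lambda|\le S/k$, so the bound on $|\Lambda|$ follows from the bound on $S$. I would bound $S$ by combining random sampling, the Marcus-Tardos bound of \cite{MT} for non-overlapping 2-rich lenses, and a polynomial-partitioning step.

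The argument begins with a reduction to ``one lens per endpoint pair.'' For any pair $(p,q)$ that is an endpoint pair of some $\lambda\in\Lambda$, the lenses of $\Lambda$ based at $(p,q)$ partition disjoint subsets of the $d(p,q)$ circles through both points: if two of them shared a circle $c$, their shorter arcs on $c$ would coincide, violating non-overlap. Merging all lenses at each $(p,q)$ into a single ``mega-lens'' of degree $D(p,q)\le d(p,q)$ then produces a new non-overlapping $k$-rich family that preserves $S$ and has exactly one lens per endpoint pair. Next, group the mega-lenses dyadically by degree, $\Lambda_j=\{\lambda:|\lambda|\in[2^jk,2^{j+1}k)\}$, and for each $j$ sample each circle independently with probability $p_j=\Theta(1/(2^jk))$. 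Each $\lambda\in\Lambda_j$ then survives as a 2-rich lens in the sample with positive constant probability, so Marcus-Tardos in the sample yields $|\Lambda_j|=O\big((n/(2^jk))^{3/2}\log(n/(2^jk))\big)$. Multiplying by the maximal degree $2^{j+1}k$ in $\Lambda_j$ and summing over $j$ gives only $S=O(n^{3/2}\log(n/k)/k^{1/2})$---a factor of $k$ weaker than the target.

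To close this gap I would apply polynomial partitioning of degree $D\sim k$ to the set of circles: the zero set of a polynomial of degree $D$ decomposes the plane into $O(D^2)$ cells, each meeting at most $O(n/D)$ of the circles, with $\sum_\Delta n_\Delta=O(nD)$ and $\sum_\Delta n_\Delta^{3/2}\le n^{3/2}D^{1/2}$. Running the dyadic sampling inside each cell and summing over cells produces the desired $1/k^{3/2}$ factor, while the choice $D\sim k$ converts the sampling log from $\log(n/k)$ into $\log(n/(Dk)^2)=\log(n/k^3)$. The additive terms $n$ in the $S$-bound and $n/k$ in the $|\Lambda|$-bound correspond to the regime $k>n^{1/3}$ in which the main term becomes subdominant.

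\textbf{Main obstacle.} The principal difficulty is the boundary contribution in the partitioning step: lenses with at least one endpoint on the partition curve $Z(P)$ must be controlled by an incidence-type argument between the $O(nD)$ points of $C\cap Z(P)$ and the circles, using that each pair of circles meets in at most two points. This bound must match the cell bound \emph{without} losing any of the factor of $k$ gained inside the cells, which is delicate and will require careful use of point-curve incidences. A second independent proof, as promised in the paper, may replace polynomial partitioning by a direct Marcus-Tardos-style cutting applied to the mega-lens family produced in the reduction step, but the same factor-of-$k$ issue is expected to reappear there and demand a similarly careful analysis.
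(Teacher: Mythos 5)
The reduction steps at the beginning of your argument are sound: merging lenses at a common endpoint pair (two non-overlapping lenses at $(p,q)$ cannot share a circle, since then their shorter arcs on that circle would coincide) preserves the degree sum and is exactly the reduction the paper makes in its Remark~\ref{rem:disjointPtsAssumption}, and your dyadic sampling plus Marcus--Tardos computation correctly reproduces the ``easy'' bound $O(n^{3/2}\log(n/k)/k^{3/2})$ for $|\Lambda|$, which the paper also records as the pre-existing state of the art.

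The polynomial partitioning step, however, is broken in a way that is not repairable along the lines you propose, and the error is in the direction of the inequality you write down yourself. A degree-$D$ partitioning of the \emph{plane} has $O(D^2)$ cells and $\sum_\Delta n_\Delta = O(nD)$, so by your own estimate $\sum_\Delta n_\Delta^{3/2} = O(n^{3/2}D^{1/2})$, which \emph{increases} with $D$. Running the dyadic-sampling bound inside each cell and summing therefore gives, for the degree sum, $O\bigl(k^{-1/2}\sum_\Delta n_\Delta^{3/2}\log(n_\Delta/k)\bigr) = O(n^{3/2}D^{1/2}\log(n/(Dk))/k^{1/2})$. With $D\sim k$ this is $O(n^{3/2}\log(n/k^2))$, which is \emph{worse} than the bound you already had from sampling alone, not better by a factor of $k$. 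The reason 2D partitioning cannot help here is structural: circles are one-dimensional, each circle enters $\Theta(D)$ cells, so cells do not see ``a $1/D^2$ fraction of the circles'' the way they see a $1/D^2$ fraction of a point set; the superlinear per-cell bound $n_\Delta^{3/2}$ then blows up rather than contracts under partitioning. (Also, $\log(n/(Dk)^2)$ is not $\log(n/k^3)$ at $D\sim k$; it is $\log(n/k^4)$, so the log does not come out right either.)

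The paper gets around this by changing the ambient space so that the objects being partitioned are \emph{points}, not curves. Each circle $c$ is lifted to a point $c^*\in\reals^3$, each lens $\lambda_{p,q}$ to the line $\ell_\lambda = p^*\cap q^*$, and the sum of degrees of $\Lambda$ becomes the size of a restricted incidence set $\Inov$ between points and lines in $\reals^3$. Partitioning $\reals^3$ by a degree-$D$ polynomial now gives $O(D^3)$ cells with $O(n/D^3)$ points each, and $\sum_\Delta(n/D^3)^{3/2} = n^{3/2}/D^{3/2}$ \emph{decreases} with $D$; taking $D\sim k$ gives precisely the $k^{-3/2}$ gain. The ``main obstacle'' you correctly identify---the contribution from the zero set of the partitioning polynomial---is handled in the lifted picture by a genuinely new geometric fact, Lemma~\ref{no3coplanar}: three lenses that share a participating circle cannot give rise to coplanar lines $\ell_\lambda$. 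This is what caps the boundary contribution (via a Guth--Katz ruled-surface decomposition of $Z(f)$, Lemma~\ref{lem:incidencesInZeroSet}) and makes the induction close. Nothing in your 2D picture plays the role of this lemma, and without some analogue the boundary term would dominate even if the cell term were fixed. Finally, you do not address the regime $k\gtrsim n^{1/3}$, where the additive $n/k$ and $n$ terms dominate; the paper handles this with a separate point-line argument in $\reals^3$ (Proposition~\ref{prop:ptLineGK}) that again relies on Lemma~\ref{no3coplanar}, and this base case is also needed to run the paper's second proof.
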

 
As mentioned, Theorem \ref{krich52-lens} was proved for the case $k=2$ by Marcus and Tardos \cite{MT}. 
When $k$ is large, we will show that Theorem \ref{krich52-lens} can be recast as an incidence problem 
between points and lines in $\reals^3$. Crucially, we will show that only a few incidences of
the type we analyze can occur inside any plane, and this will allow us to use a variant of 
Guth and Katz's point-line incidence bound from \cite{GK} to prove Theorem \ref{krich52-lens} 
when $k\geq n^{1/3}$. The details of this argument will be discussed in Section \ref{sec:prelim}. 

For intermediate values of $k$, we will give two proofs of Theorem \ref{krich52-lens}. The first proof 
yields the bounds stated above, and the second proof gives a sightly weaker bound, in which the $\log{(n/k^3)}$ 
factor is weakened to $\operatorname{polylog} n$. Both of these proofs will use polynomial partitioning 
to divide the arrangement $C$ of circles into smaller sub-arrangements. This breaks the problem of 
estimating $|\Lambda|$ into many smaller sub-problems, with smaller corresponding parameters $n'$ 
(for the number of circles) and $k'$ (for the richness). 
In the first proof, we will construct our partitioning so that in each sub-problem we have $k'=2$, while 
in the second proof we will construct our partitioning so that in each sub-problem we have $k'=(n')^{1/3}$. 

\medskip
\noindent\emph{Remarks.}\\
1. When $k\ge n^{1/3}\log^{2/3}n$, Theorem \ref{krich52-lens} states that $|\Lambda|=O(n/k)$. 
This bound is tight, since we can choose $C$ to be a union of $n/k$ sets of circles, where each 
set of circles has cardinality $k$ and the circles in each set contain a common pair of points. 
For smaller values of $k$ we conjecture that the bound in Theorem \ref{krich52-lens} is not tight.

\medskip
\noindent 2. Theorem~\ref{krich52-lens} implies that the circles in $C$ can be cut into 
$O\left( \frac{n^{3/2}\log {(n/k^3)}}{k^{3/2}} + n\right)$ arcs, so that no pair of points is contained 
in $k$ of the arcs, i.e., the resulting collection of arcs do not form any $k$-rich lens. 
Combining this observation with a variant of Sz\'ekely's crossing-lemma technique~\cite{Sze}, 
yields a new proof that the number of incidences between $m$ points and $n$ circles in the plane is 
(see Section~\ref{sec:incidence_bounds}):
\[
O\left( m^{2/3}n^{2/3} + m^{6/11}n^{9/11}\log^{2/11}n + m + n \right).
\]
This bound was first proved in \cite{ANPPSS,MT}. The point-circle incidence problem is among the 
most basic problems in incidence geometry, and has been studied intensively during the first half 
of the 2000's~\cite{ANPPSS,ArS:lens,MT}, culminating in the bound above.
This bound is strongly suspected not to be tight for $n^{1/3} \le m < n^{5/4}\log^{3/2}n$
(which is the range where the second term dominates), but no improvement has
been found in the last 15 years. While our result also does not yield 
an improvement, it provides a new proof (two proofs as a matter of fact),
and we hope that this development will spur efforts to improve the above bound. 

\section{Preliminaries: The case of large or small $k$} \label{sec:prelim}

In this section we will prove Theorem \ref{krich52-lens} when $k$ is small or $k\geq n^{1/3}$. 
As discussed above, when $k$ is small (smaller than some constant) then the result immediately follows from \cite{MT}.
\begin{theorem}[Marcus and Tardos \cite{MT}]\label{thm:MT}
Let $C$ be a set of $n$ circles in the plane, let $k\geq 2$, and let $\Lambda$ be a set of 
pairwise non-overlapping $k$-rich lenses in $C$. Then $\Lambda$ has cardinality $O(n^{3/2}\log n)$, 
and the sum of the degrees of the lenses in $\Lambda$ is also $O(n^{3/2}\log n)$.
\end{theorem}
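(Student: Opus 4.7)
The plan is to first reduce from $k$-rich lenses to the $k=2$ case, and then to bound the number of 2-rich pairwise non-overlapping lenses via a crossing-number argument, sharpened by the Marcus--Tardos theorem on the extremal function of pattern-avoiding 0--1 matrices.

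For the reduction, I would use that any two distinct circles meet in at most two points, so a pair of circles uniquely determines the point-pair $\{p,q\}$ at their intersection, and the two possible lenses on $\{p,q\}$ (shorter-arc and longer-arc) overlap each other. Consequently, no two lenses in $\Lambda$ can share the same pair of defining circles. Picking one pair from each $\lambda\in\Lambda$ therefore produces $|\Lambda|$ distinct non-overlapping 2-rich lenses, reducing the cardinality bound to the case $k=2$. For the sum of degrees, I would instead sum over all $\binom{d_\lambda}{2}$ pairs of each lens to produce $\sum_\lambda\binom{d_\lambda}{2}$ distinct non-overlapping 2-rich lenses; a 2-rich cardinality bound of $O(n^{3/2}\log n)$ then yields $\sum_\lambda d_\lambda^2=O(n^{3/2}\log n)$, and Cauchy--Schwarz combined with the cardinality bound on $|\Lambda|$ gives $\sum_\lambda d_\lambda=O(n^{3/2}\log n)$.

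For the 2-rich case, I would form a multigraph $G$ on vertex set $C$ in which each lens $\lambda_{p,q}(\{c_1,c_2\})$ contributes one edge, drawn as the shorter arc of $c_1$ from $p$ to $q$. The non-overlapping condition ensures that the arcs drawn on any single circle are pairwise interior-disjoint, and two edges arising from different circle pairs cross in at most $O(1)$ points. A Sz\'ekely-style crossing inequality applied to this drawing then yields a preliminary bound of the form $|\Lambda|=O(n^{5/3})$, using only the crude fact that the total number of crossings is $O(n^2)$.

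Sharpening this from $n^{5/3}$ to $n^{3/2}\log n$ is where I expect the main difficulty to lie. For that step I would follow the Marcus--Tardos scheme of encoding the lens-arc configuration as a 0--1 matrix whose rows are indexed by circles and whose columns record positions along each circle in a fixed cyclic order; the non-overlapping condition translates into the absence of a certain alternation pattern, which can be realized as the avoidance of a fixed permutation submatrix. Applying the Marcus--Tardos theorem that such pattern-avoiding matrices have only a linear number of ones, and feeding this into a recursive bisection of $C$ (recursing on two halves of the circles and separately handling lenses defined by one circle from each half), would produce the final $O(n^{3/2}\log n)$ bound, with the single $\log n$ factor arising from the depth of the recursion. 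The delicate point is the precise translation of the geometric non-overlapping condition into a specific forbidden submatrix, which is the technical heart of the argument.
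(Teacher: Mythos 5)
The paper does not prove Theorem~\ref{thm:MT}; it simply cites it from~\cite{MT}, so there is no internal argument to compare yours against. Judged on its own terms, your proposal identifies the right broad direction (forbidden-pattern / Marcus--Tardos machinery), but it has two concrete errors and the technical heart is left as a placeholder.

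First, the reduction of the degree bound is wrong. If $\lambda = \lambda_{p,q}(C')$ and you take two sub-lenses $\lambda_{p,q}(\{c_1,c_2\})$ and $\lambda_{p,q}(\{c_1,c_3\})$ sharing the circle $c_1$, then both use the \emph{same} shorter arc of $c_1$ between $p$ and $q$, so by the paper's definition these two sub-lenses \emph{overlap}. Hence the $\binom{d_\lambda}{2}$ sub-lenses you produce from a single parent lens are far from pairwise non-overlapping, and the claimed consequence $\sum_\lambda d_\lambda^2 = O(n^{3/2}\log n)$ is in fact false (a single lens with all $n$ circles through $p,q$ already gives $\binom{n}{2}$). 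The correct reduction is to split $C'$ into $\lfloor |C'|/2\rfloor$ \emph{disjoint} pairs; sub-lenses from the same parent then share no circle, so they are trivially non-overlapping, and together with the (trivial) cardinality reduction this does yield $\deg(\Lambda) = O(n^{3/2}\log n)$ from the $2$-rich cardinality bound.

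Second, the crossing-number step is not well-posed. You propose a multigraph $G$ on vertex set $C$, with the edge corresponding to $\lambda_{p,q}(\{c_1,c_2\})$ drawn as the shorter arc of $c_1$ from $p$ to $q$. But that arc has endpoints $p$ and $q$ in the plane, not the two vertices $c_1,c_2$; this is not a drawing of $G$, and Sz\'ekely's inequality does not apply. (The preliminary $O(n^{5/3})$-type bounds in the literature, e.g.\ Tamaki--Tokuyama, use a carefully constructed drawing that routes each edge from one circle's center, along the lens arc, to the other circle's center, and the crossing count is delicate.) Finally, the forbidden-submatrix step is the genuine content of~\cite{MT}, and your sketch of ``columns record positions along each circle in a fixed cyclic order'' is not what that argument does; the key geometric input is the \emph{order-reversal} property (at the two endpoints of a lens the participating circles appear in reversed order by slope --- exactly the ``intersection reverse sequences'' of the title of~\cite{MT}, and the same property exploited in Section~\ref{sec:second} of this paper), which is then fed into the F\"uredi--Hajnal / Marcus--Tardos extremal theorem for pattern-avoiding $0$--$1$ matrices. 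As written, the core step of your proposal remains an unproved claim.
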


When $k\geq n^{1/3}$, Theorem \ref{krich52-lens} will follow from a variant of Guth and Katz's 
point-line incidence bound \cite{GK}. Before stating this result we will need to introduce some 
additional notation. In what follows, $C$ will be a set of circles in the plane, $k\geq 2$, and 
$\Lambda$ will be a set of pairwise non-overlapping $k$-rich lenses in $C$. We define $\deg(\Lambda)$ 
to be the sum of the degrees of the lenses in $\Lambda$. We say that a circle $c\in C$ 
\emph{participates} in a lens $\lambda_{p,q}(C')\in\Lambda$ if $c\in C'$. 

We identify each circle $c$, with center $(x,y)$ and radius $r$, with the point 
\[
c^* = (x,y,r^2-x^2-y^2)
\]
in $\reals^3$. We define $C^* = \{c^*\mid c\in C\}$, and
identify each point $p=(p_x,p_y)\in\reals^2$ with the plane 
\[ 
p^* = \{(x,y,z)\mid z = -2p_xx - 2p_yy + (p_x^2+p_y^2)\}.
\] 
Observe that $(x,y,r)\in p^*$ if and only if $(x-p_x)^2 + (y-p_y)^2 = r^2$, i.e. the point $p$ 
is contained in the circle centered at $(x,y)$ of radius $r$. We identify each lens 
$\lambda = \lambda_{p,q}(C')$ with the line $\ell_\lambda = p^*\cap q^*$. 
Note that the lines $p^*\cap q^*$ and $s^*\cap t^*$ coincide if and only if $\{p,q\} = \{s,t\}$ (and therefore our setting does not contain coinciding lines).\footnote{%
  This is because all planes of the form $p^*$ are tangent to the
  paraboloid $\Pi:\; z = -x^2-y^2$. A line $p^*\cap q^*$ that
  is disjoint from $\Pi$ is contained in exactly two such tangent planes, and these planes determine $p$ and $q$.}
Define
\[
  L(\Lambda) = \{ \ell_{\lambda}\mid \lambda\in\Lambda \}.
\]
For technical reasons, it will be convenient to require that no two distinct lenses in $\Lambda$ share the same pair $\{p,q\}$ of endpoints, and thus the map $\lambda_{p,q}(C') \mapsto \ell_{p,q}$ is injective on $\Lambda$, i.e. 
$|L(\Lambda)|=|\Lambda|$. As we will see, this additional assumption is harmless, and we will discuss it briefly in Remark \ref{rem:disjointPtsAssumption} below.

We define
\[
\Inov(\Lambda)  = \{ (c^*,\ell_{\lambda})\mid c\in C,\ \lambda\in\Lambda,\ c\ \text{participates in}\ \lambda  \}.
\] 
If the sets $C$ and $\Lambda$ are apparent from the context, we write $\Inov$ in place of $\Inov(\Lambda)$. 
Note that $|\Inov| = \deg(\Lambda)$. If $(c^*,\ell_{\lambda})\in \Inov$ then $c^*\in\ell_{\lambda}$. 
Thus $\Inov\subset I(C^*,L(\Lambda))$, where $I(C^*,L(\Lambda))$ denotes the set of all incidences
between the points of $C^*$ and the lines of $L(\Lambda)$. 
Note that $\Inov $ may be a proper subset of $I(C^*,L(\Lambda))$ due to the pairwise
non-overlapping property of the lenses in $\Lambda$. That is, a circle $c$ might 
pass through the vertices $p$, $q$ of $\lambda$ but it does not participate
in $\lambda$ because there is another lens $\lambda'\in \Lambda $, in which $c$ 
does participate, so that the arc of $c$ in $\lambda'$ overlaps its arc 
between $p$ and $q$; see Figure~\ref{overlens2} for an illustration.  

\begin{figure}[htb]
  \begin{center}
    \input{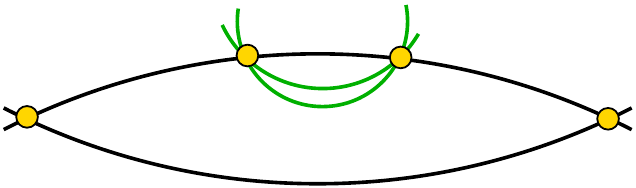_t}
    \caption{The non-overlapping property may exclude some arcs from participating in a lens.} 
    \label{overlens2}
  \end{center}
\end{figure}

As a first attempt to bound $|\Inov|$, observe that the Szemer\'edi-Trotter theorem \cite{ST} implies that 
$|\Inov|=O(n^{2/3} |\Lambda|^{2/3} + n + |\Lambda|)$, and thus, since each $\lambda\in\Lambda$ 
is $k$-rich, we have, when $k$ is sufficiently large, $|\Lambda|=O(n^2/k^3 + n/k)$. Unfortunately 
this bound is too weak to prove Theorem \ref{krich52-lens}. To obtain a stronger bound, we will 
use the following crucial property about non-overlapping lenses, which implies that few of the incidences in 
$\Inov$ can concentrate in a plane. 

\begin{lemma}\label{no3coplanar}
  Let $C$ be a set of circles and let $\Lambda$ be a set of pairwise non-overlapping lenses in $C$. 
  Let $\lambda_1,\lambda_2,\lambda_3\in \Lambda$ be distinct lenses, and suppose that there is a 
  circle $c\in C$ that participates in all three lenses, that is, $(c^*,\ell_{\lambda_i})\in \Inov$ 
  for $i=1,2,3$. Then $\ell_{\lambda_1},\ell_{\lambda_2},\ell_{\lambda_3}$ are not coplanar.
\end{lemma}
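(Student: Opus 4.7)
The plan is to show that, in suitable coordinates, the $3\times 3$ determinant of the direction vectors of $\ell_{\lambda_1},\ell_{\lambda_2},\ell_{\lambda_3}$ is strictly positive, which rules out coplanarity. The strategy is entirely computational once the right simplification of the determinant is found.

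First, I would parametrize each pair of lens endpoints on $c$ by its angular position. Writing $c$'s center as $(c_x,c_y)$ and radius $R$, put $p_i=(c_x+R\cos\theta_i,c_y+R\sin\theta_i)$ and $q_i=(c_x+R\cos\phi_i,c_y+R\sin\phi_i)$, and set $\alpha_i=(\theta_i+\phi_i)/2$ (the axis of symmetry of the chord) and $\beta_i=(\theta_i-\phi_i)/2$ (half the subtended central angle). A direct cross-product of the normals of $p_i^{*}$ and $q_i^{*}$ yields that $\ell_{\lambda_i}$ has direction proportional to $(\cos\alpha_i,\sin\alpha_i,-2R\cos\beta_i-2(c_x\cos\alpha_i+c_y\sin\alpha_i))$. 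After relabelling so that the three $\alpha_i$ appear in cyclic order on the circle, I set $\gamma_i=\alpha_{i+1}-\alpha_i$ (cyclic, $\gamma_1+\gamma_2+\gamma_3=2\pi$); the pairwise non-overlapping hypothesis then reads $\gamma_i>\beta_i+\beta_{i+1}$ for each $i$ (cyclic).

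Next, I would evaluate the determinant of the three direction vectors. Expanding along the third row, the part of the determinant involving the center $(c_x,c_y)$ collapses by the planar identity $\sin(\alpha_3-\alpha_2)\hat n_1-\sin(\alpha_3-\alpha_1)\hat n_2+\sin(\alpha_2-\alpha_1)\hat n_3=(0,0)$, with $\hat n_i=(\cos\alpha_i,\sin\alpha_i)$. What remains, up to a positive scalar factor, is
\[
D\;=\;\cos\beta_1\sin\gamma_2+\cos\beta_2\sin\gamma_3+\cos\beta_3\sin\gamma_1,
\]
so coplanarity is equivalent to $D=0$, and it suffices to show $D>0$. To this end I would split on the sign of $\sin\gamma_i$. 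Since $\sum\gamma_i=2\pi$, at most one $\gamma_i$ can exceed $\pi$. In the easy case where all $\gamma_i\le\pi$, each term of $D$ is nonnegative and at least two of them are strictly positive, so $D>0$. Otherwise, WLOG $\gamma_3>\pi$ and $\gamma_1,\gamma_2<\pi$; using $\sin\gamma_3=-\sin(\gamma_1+\gamma_2)$ I rearrange
\[
D=\sin\gamma_2(\cos\beta_1-\cos\beta_2\cos\gamma_1)+\sin\gamma_1(\cos\beta_3-\cos\beta_2\cos\gamma_2),
\]
and invoke $\gamma_1>\beta_1+\beta_2$ and $\gamma_2>\beta_2+\beta_3$: since $\gamma_1,\gamma_2$ and $\beta_i+\beta_{i+1}$ all lie in $(0,\pi)$ (with $\beta_i\in(0,\pi/2)$ forced by the non-overlap inequalities), the product-to-sum expansion of $\cos(\beta_i+\beta_{i+1})$ gives $\cos\beta_1-\cos\beta_2\cos\gamma_1>\sin\beta_2\sin(\beta_1+\beta_2)>0$ and, symmetrically, $\cos\beta_3-\cos\beta_2\cos\gamma_2>0$; together with $\sin\gamma_1,\sin\gamma_2>0$ this yields $D>0$.

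The main obstacle is the first half of the argument: finding the explicit expression for $D$ and, in particular, noticing that the dependence on the center $(c_x,c_y)$ of $c$ cancels in the $3\times 3$ determinant, so that the coplanarity criterion depends only on the intrinsic angular data $\{\alpha_i,\beta_i\}$ of the six lens endpoints on $c$. Once this reduction is in hand, the positivity proof under the non-overlapping constraints is a short trigonometric case split, as described above.
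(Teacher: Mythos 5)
Your proof is correct, and it takes a genuinely different route from the paper's. The paper argues conceptually: a nonvertical plane in the lifted space corresponds to a point $w\in\reals^2$ together with a fixed power, so if $\ell_{\lambda_1},\ell_{\lambda_2},\ell_{\lambda_3}$ lay in a common plane through $c^*$, then the three chords $p_iq_i$ of $c$ would be concurrent at $w$; concurrency of three chords of a single circle forces two of the corresponding arcs to overlap (vertical planes are treated separately as bisector planes). You instead show directly that the three direction vectors of the lines $\ell_{\lambda_i}$ have nonzero determinant, which rules out coplanarity at once, with no case split between vertical and nonvertical planes. The key algebraic observation you found---that the center-dependent part of the determinant cancels via the identity $\sin(\alpha_3-\alpha_2)\hat n_1-\sin(\alpha_3-\alpha_1)\hat n_2+\sin(\alpha_2-\alpha_1)\hat n_3=0$, leaving the intrinsic quantity $\cos\beta_1\sin\gamma_2+\cos\beta_2\sin\gamma_3+\cos\beta_3\sin\gamma_1$---is genuinely nontrivial and is not in the paper. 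I verified your formula for the direction vector, the cancellation of the center terms, the translation of non-overlap into the cyclic inequalities $\gamma_i>\beta_i+\beta_{i+1}$, and the positivity argument in both cases (including the identity $\cos\beta_1-\cos\beta_2\cos(\beta_1+\beta_2)=\sin\beta_2\sin(\beta_1+\beta_2)$).

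Two small imprecisions worth fixing: the overall scalar multiplying your $D$ is $-2R$, which is negative, so ``up to a positive scalar factor'' should read ``up to a nonzero scalar''; nonvanishing is what you need, so this does not affect the conclusion. Second, $\beta_i\in(0,\pi/2)$ is not forced by the non-overlap inequalities alone, as you claim; it follows from the ``shorter arc'' in the definition of a lens, with the boundary case $\beta_i=\pi/2$ (antipodal endpoints) governed by the footnote in the paper---that degenerate case deserves a sentence if you want the argument airtight. A broader comparison: the paper's power-of-a-point argument is more robust to generalization (it does not use the explicit trigonometric parametrization of circles), whereas your computational route is more self-contained and avoids appealing to the geometric dictionary between planes in the lifted space and (point, power) pairs.
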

\begin{proof}
The transformations $c\mapsto c^*$ and $\lambda\mapsto\ell_\lambda$ described above have the following property. 
For each nonvertical plane $h\subset\reals^3$ there exists a point $w\in\reals^2$ and a \emph{power}\footnote{%
  Recall that the power of a point $w$ with respect to a circle $c$, 
  centered at $\xi$ and having radius $r$, is $|w\xi|^2-r^2$.}
$\pi$ such that 
\[
h = \{ c^*\mid c\ \textrm{is a circle, and}\ w\ \textrm{has power}\ \pi\ \textrm{with respect to}\ c\}.
\]
Next, suppose that $\ell_{\lambda_1},\ell_{\lambda_2},\ell_{\lambda_3}$ lie in a common 
nonvertical plane $h$ (this plane must necessarily contain $c^*$), and let $w$ and $\pi$ be the point 
and power associated with $h$. Let $p_i$ and $q_i$ be the vertices of $\lambda_i$,
for $i=1,2,3$. It is then easy to see that $w$ must lie on each of the lines
(in the $xy$-plane) through $p_i$ and $q_i$, for $i=1,2,3$, and the power 
$\pi$ is $\pm |wp_i|\cdot|wq_i|$, where the sign is positive (resp.~negative) 
if $w$ lies outside (resp.~inside) the segment $p_iq_i$ (any other point cannot 
have a fixed power with respect to all the circles $c$ whose dual points $c^*$ lie on $\ell_{\lambda_i}$).
In other words, the lines through $p_1q_1$, $p_2q_2$, and $p_3q_3$ are concurrent
and meet at $w$. This however is impossible, because the circle $c$ participates
in all three lenses, which implies, as is easily verified (see Figure~\ref{power}), 
that at least two of the lenses $\lambda_1$, $\lambda_2$, $\lambda_3$ must be 
overlapping, a contradiction that completes the proof for nonvertical planes.

\begin{figure}[htb]
  \begin{center}
    \input{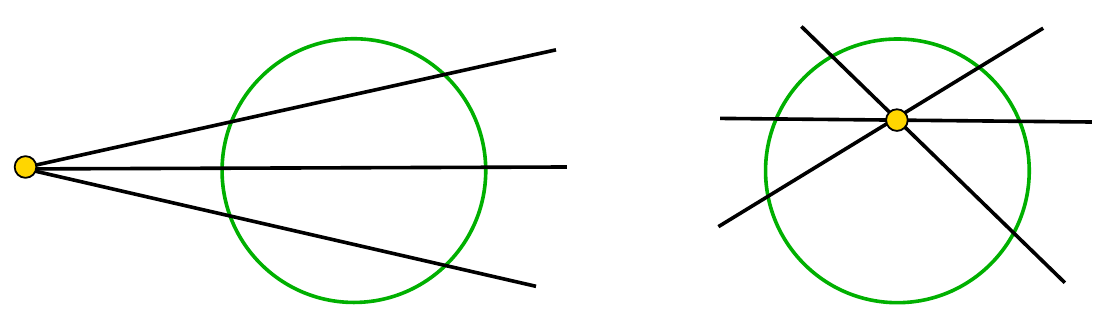_t}
    \caption{The lines in $\reals^3$ corresponding to three pairwise non-overlapping lenses that share
      a common circle, which participates in all three of them, cannot be coplanar. (They are not
      the lines in the $xy$-plane drawn in the figure.)} 
    \label{power}
  \end{center}
\end{figure}

The situation is similar when $h$ is vertical. In this case all the circles
$c$ for which $c^*\in h$ are centered at points on the line $\ell$ of intersection
of $h$ with the $xy$-plane. For a lens $\lambda = \lambda_{p,q}$, the associated line  $\ell_\lambda$ is contained in $h$ if and only if $\ell$ is the bisector of $pq$.
Again, the fact that the lenses of $\Lambda$ are pairwise non-overlapping is easily
seen to imply that a circle $c$ can contain at most two pairs $p,q$ such
that $\lambda_{p,q}$ is a lens in $L$, with $\ell_\lambda\subset h$,
and $c$ participates in $\lambda_{p,q}$. Hence, 
$\ell_{\lambda_1},\ell_{\lambda_2},\ell_{\lambda_3}$ cannot all lie in $h$.
\end{proof}

Note that a point $c^*$ can be incident to arbitrarily many lines on a plane $h$, but
Lemma~\ref{no3coplanar} implies that only two of them (at most) can contribute to $\Inov$.

\begin{corollary} \label{cor:fewIncidencesInPlane}
  Let $C$ be a set of circles and let $\Lambda$ be a set of pairwise non-overlapping lenses in $C$. 
  Let $h\subset\reals^3$ be a plane, and let $(C^*)'\subset C^*$ and $L'\subset L(\Lambda)$ be the 
  set of points and lines contained in $h$, respectively. Then 
  \[ 
  |\Inov(\Lambda) \cap I((C^*)',\ L')|\leq 2|(C^*)'|.
  \]
\end{corollary}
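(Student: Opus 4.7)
The plan is to deduce Corollary~\ref{cor:fewIncidencesInPlane} as an immediate consequence of Lemma~\ref{no3coplanar} by counting novel incidences point-by-point rather than line-by-line. First I would fix a plane $h$ and consider an arbitrary point $c^* \in (C^*)'$. I would look at the set of lines $\ell_\lambda \in L'$ for which $(c^*, \ell_\lambda) \in \Inov(\Lambda)$; by definition of $L'$, every such line lies in $h$, and by definition of $\Inov$, the circle $c$ participates in each corresponding lens $\lambda$.

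The key step is to apply Lemma~\ref{no3coplanar}: if there were three distinct lenses $\lambda_1,\lambda_2,\lambda_3 \in \Lambda$ in which $c$ participates and whose corresponding lines all lie in $h$, then these three lines would be coplanar, contradicting the lemma. Hence at most two lines of $L'$ can be paired with $c^*$ in $\Inov(\Lambda)$.

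Summing this per-point bound over all $c^* \in (C^*)'$ gives
\[
|\Inov(\Lambda) \cap I((C^*)', L')| \le 2|(C^*)'|,
\]
which is exactly the asserted inequality. There is no real obstacle here; the whole content of the corollary is packaged into Lemma~\ref{no3coplanar}, and the only thing to notice is that the ``at most two coplanar lines'' consequence is best exploited by charging the incidences to points rather than lines, so that a single point $c^*$ cannot absorb more than two novel incidences within the plane $h$.
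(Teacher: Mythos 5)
Your proposal is correct and follows exactly the route the paper intends: the paper precedes the corollary with the remark that a point $c^*$ may lie on many lines of a plane $h$, but Lemma~\ref{no3coplanar} forces at most two of those lines to contribute to $\Inov$, and then sums over points. You have simply spelled out that implicit one-line argument.
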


Corollary \ref{cor:fewIncidencesInPlane} suggests that lines contained in a plane contribute 
few incidences to $\Inov$. Later in our arguments, we will need to study the contribution to 
$\Inov$ coming from lines contained in an algebraic surface. The following lemma says that 
after removing a small number of ill-behaved lines, the contribution to $\Inov$ is still small. 

\begin{lemma} \label{lem:incidencesInZeroSet}
  Let $C$ be a set of circles and let $\Lambda$ be a set of pairwise non-overlapping lenses in $C$. 
  Let $P\in\reals[x,y,z]$ be a polynomial of degree $D$, and let $L'\subset L(\Lambda)$ be a set of 
  lines contained in the zero set $Z(P)$ of $P$. Then there is a set $L''\subset L'$ of cardinality 
  at most $11D^2$, so that
  \[
  |\Inov(\Lambda) \cap I(C^*\cap Z(P), L'\backslash L'')| \leq 2|C^*\cap Z(P)| + D|L'\backslash L''|. 
  \]
\end{lemma}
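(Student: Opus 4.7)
The plan is to decompose $Z(P)$ into its irreducible components, apply Corollary~\ref{cor:fewIncidencesInPlane} to the planar components, and handle the non-planar components via a Cayley--Salmon-type bound on the number of lines contained in an algebraic surface.

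Concretely, I would factor $P = P_{\mathrm{lin}} \cdot P_{\mathrm{rest}}$, where $P_{\mathrm{lin}}$ is the product of the distinct linear factors of $P$ and $P_{\mathrm{rest}}$ carries no linear factor. Then $Z(P)$ is the union of at most $\deg P_{\mathrm{lin}}$ plane-components $\{h_j\}$ together with the surface $Z(P_{\mathrm{rest}})$, which has no planar components. Let $L''$ consist of every line in $L'$ that either (a) lies entirely in $Z(P_{\mathrm{rest}})$, or (b) lies in the intersection of two distinct plane-components $h_j \cap h_{j'}$. Part (b) contributes at most $\binom{D}{2}$ lines; part (a) is bounded by the number of lines on $Z(P_{\mathrm{rest}})$, which---after iteratively extracting ruled components via a flecnode/Cayley--Salmon argument in the style of Guth--Katz~\cite{GK}---is $O(D^2)$. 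Balancing the constants yields $|L''| \leq 11 D^2$.

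Every line in $L' \setminus L''$ lies in a \emph{unique} plane-component $h_j$ of $Z(P)$ and possibly also in some non-planar components that meet $h_j$ transversally. For such a line $\ell \subset h_j$ I split the contributions to $\Inov$ on $\ell$ according to whether the incident point $c^*$ is charged ``to'' the plane $h_j$ or to some transverse component. Assigning each point of $C^* \cap Z(P)$ to one containing plane-component and applying Corollary~\ref{cor:fewIncidencesInPlane} to that plane, one obtains a total contribution of at most $2|C^* \cap Z(P)|$ from the planar incidences. The remaining incidences on $\ell$ come from points where $\ell$ meets other irreducible factors of $P$; since $\ell \not\subset Z(P_{\mathrm{rest}})$, we have $|\ell \cap Z(P_{\mathrm{rest}})| \leq \deg P_{\mathrm{rest}} \leq D$, which after summing over $L' \setminus L''$ produces the second term $D|L' \setminus L''|$.

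The main obstacle is the Cayley--Salmon step: producing the bound of $O(D^2)$ on the number of lines contained in a polynomial of degree $D$ with no linear factors, while correctly absorbing ruled components (quadrics, ruled cubics, etc.) into the incidence analysis. This iterative extraction is standard in the polynomial-partitioning literature, but the bookkeeping---especially the interaction between lines lying on several components and ensuring the constant in front of $D^2$ really is $11$---is the delicate part of the proof.
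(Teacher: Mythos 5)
Your proposed construction of $L''$ has a genuine gap that the paper's proof is specifically designed to avoid. You define $L''$ to contain all lines of $L'$ lying in $Z(P_{\mathrm{rest}})$, and you assert that after a Cayley--Salmon-style extraction this set has size $O(D^2)$. But the Cayley--Salmon/flecnode bound of roughly $11D^2$ on the number of lines contained in a degree-$D$ surface applies only to surfaces with no \emph{ruled} components; it does \emph{not} follow from the absence of \emph{linear} factors. The polynomial $P_{\mathrm{rest}}$ can have non-planar ruled components (a regulus such as a hyperboloid of one sheet, a quadric cone, or a more general singly ruled surface), each of which contains infinitely many lines, and $L'$ could contain arbitrarily many of them. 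So $|L''|$ as you define it is not bounded by $11D^2$, and the lemma's conclusion fails for your choice of $L''$.

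The paper's proof avoids this by putting into $L''$ \emph{only} the lines lying in the union $Z^{(4)}$ of non-ruled irreducible components; by \cite[Corollary 3.3]{GK} this set genuinely has size at most $11D^2$. The lines remaining in $L'\setminus L''$ may still lie on reguli or singly ruled non-planar components, and the key point is that one can still bound the incidences there directly from the ruling structure: a point on a regulus lies on at most two lines of the surface (so $\leq 2|C^*_i|$), and a singly ruled irreducible surface has at most one exceptional point and at most two exceptional non-generator lines (so $\leq 2|C^*_i| + |L_i|$). Your treatment of the planar components via Corollary~\ref{cor:fewIncidencesInPlane}, and your transversal-intersection count of $\leq D$ per line, are consistent with the paper's argument, but without the correct handling of ruled non-planar components the proof does not go through. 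You would need to replace your definition of $L''$ by ``lines contained in non-ruled components,'' and then add the regulus and singly-ruled cases to the incidence count, exactly as the paper does.
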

\begin{proof}
This result follows immediately from the statements in Guth and Katz~\cite[Section 3]{GK}, so we just 
briefly sketch the proof. Write $Z(P)=Z^{(1)}\cup Z^{(2)}\cup Z^{(3)}\cup Z^{(4)}$, where 
$Z^{(1)}$ is a union of planes, $Z^{(2)}$ is a union of reguli, $Z^{(3)}$ is a union of 
irreducible surfaces that are singly ruled by lines and are not planes or reguli, and $Z^{(4)}$ is the union 
of all irreducible components of $Z(P)$ that are not ruled. Let $L''$ be the set of lines 
contained in $Z^{(4)}$. By \cite[Corollary 3.3]{GK} we have $|L''|\leq 11 D^2$. 

Let $Z_1,\ldots,Z_h$ be the irreducible components of $Z(P)$. For each index $i=1,\ldots,h$, 
let $C^*_i$ be the set of points $c^*\in C^*$ that are contained in $Z_i$ and are not contained 
in any $Z_j$ with $j<i$. Similarly, let $L_i$ be the set of lines $\ell\in L'\backslash L''$ that 
are contained in $Z_i$ and are not contained in any $Z_j$ with $j<i$ 
(note that if the component $Z_i$ is not ruled, then by definition $L_i$ is empty).

First, we count the number of incidences $(c^*,\ell) \in I(C^*\cap Z(P), L'\backslash L'')$ 
for which $c^*\in C_i$ and $\ell\in L_j$ with $j\neq i$. For such an incidence, we must have that 
$\ell$ properly intersects $Z_i$. Thus there are at most $(D-1)|L'\backslash L''|$ incidences of this form. 

Next we count the number of incidences $(c^*,\ell) \in I(C^*\cap Z(P), L'\backslash L'')$ for which 
$c^*\in C_i$ and $\ell\in L_i$. If $Z_i$ is a plane, then by Corollary \ref{cor:fewIncidencesInPlane}, 
there are $\leq 2|C^*_i|$ incidences of this type. If $Z_i$ is a regulus, and hence doubly ruled, then 
it immediately follows that there are at most $2|C^*_i|$ incidences of this type. Finally, if $Z_i$ 
is singly ruled, then $Z_i$ has at most one exceptional point (incident to infinitely many lines
contained in $Z_i$), and at most two exceptional (non-generator) lines,
in the terminology of \cite{GK}, which then implies that there are at most $2|C^*_i|+|L_i|$ 
incidences of this type. Summing the above contributions, we conclude that
\[
|\Inov \cap I(C^*\cap Z(P), L'\backslash L'')| \leq (D-1)|L'\backslash L''| + \sum_i (2|C^*_i|+|L_i|) =
2|C^*\cap Z(P)| + D|L'\backslash L''|.
\] 
\end{proof}

\begin{proposition}\label{prop:ptLineGK}
  Let $C$ be a set of circles and let $\Lambda$ be a set of pairwise non-overlapping lenses in $C$. 
  Then there is an absolute constant $A$ so that
  \begin{equation}\label{eq:boundOnINov}
    |\Inov(\Lambda)| \leq A \big( |C|^{1/2}|\Lambda|^{3/4} + |C| + |\Lambda| \big).
  \end{equation}
\end{proposition}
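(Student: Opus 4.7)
Set $m=|C|$ and $n=|\Lambda|$. The plan is to adapt the Guth--Katz polynomial partitioning proof of the point--line incidence theorem in $\reals^3$: the customary hypothesis that few lines lie in any common plane is replaced by the stronger structural fact, encoded in Corollary~\ref{cor:fewIncidencesInPlane} and Lemma~\ref{lem:incidencesInZeroSet}, that few incidences of the special type $\Inov$ can concentrate in a plane (or, more generally, in a low-degree algebraic surface). I would argue by induction on $n$, and may assume throughout that $m^{1/2}n^{3/4}$ is the dominant term in the claimed bound, as the remaining regimes are handled by elementary estimates.

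First I would apply polynomial partitioning to obtain a polynomial $P\in\reals[x,y,z]$ of degree $D=c\,m^{1/2}/n^{1/4}$, for a small absolute constant $c>0$ to be fixed later, whose zero set $Z(P)$ partitions $\reals^3$ into $O(D^3)$ open cells, each containing at most $O(m/D^3)$ points of $C^*$. Incidences $(c^*,\ell_\lambda)\in\Inov$ with $c^*\notin Z(P)$ can only come from lines $\ell_\lambda$ not contained in $Z(P)$, each of which crosses at most $D+1$ cells; applying the planar Szemer\'edi--Trotter theorem inside each cell and summing via H\"older's inequality gives a total of $O(m^{2/3}n^{2/3}D^{-1/3}+m+nD)=O(m^{1/2}n^{3/4}+m)$ for our choice of $D$. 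Among the remaining incidences, those with $c^*\in Z(P)$ and $\ell_\lambda\not\subset Z(P)$ are bounded by B\'ezout's theorem by at most $Dn=O(m^{1/2}n^{3/4})$ in total, while those with $\ell_\lambda\subset Z(P)$ are controlled by Lemma~\ref{lem:incidencesInZeroSet}: outside an exceptional set $L''$ of at most $11D^2$ lines, the $\Inov$-incidence count is at most $2m+Dn=O(m^{1/2}n^{3/4}+m)$. This is precisely where the non-overlapping hypothesis enters; without it, many coplanar or co-ruled lines in $Z(P)$ could contribute arbitrarily many incidences.

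The main remaining obstacle is the contribution of the $O(D^2)$ exceptional lines in $L''$. The plan is to handle them by induction: the lenses of $\Lambda$ corresponding to $L''$ still form a pairwise non-overlapping family, so the proposition applies to the sub-problem with at most $m$ points and $|L''|$ lines, yielding a contribution of at most $A\bigl(m^{1/2}|L''|^{3/4}+m+|L''|\bigr)$. Since $|L''|\le 11D^2=11c^2m/n^{1/2}$, a direct calculation shows this is at most $A\cdot 11^{3/4}c^{3/2}\,m^{1/2}n^{3/4}$ plus admissible tail terms of size $O(Am+AD^2)$. Choosing $c$ small enough so that $11^{3/4}c^{3/2}<\tfrac12$ then allows the induction to close, provided $A$ is taken sufficiently large compared with the implicit constants appearing in the cell and $Z(P)$ estimates. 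Verifying this quantitative balance uniformly, and handling the extreme regimes $m\lesssim n^{1/2}$ and $m\gtrsim n^{3/2}$ (in which the degree $D$ would degenerate) via trivial estimates absorbed into the $O(m)+O(n)$ tail of the target bound, is the most delicate technical point of the argument.
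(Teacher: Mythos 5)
Your proposal follows essentially the same route as the paper's proof (induction on $|\Lambda|$, polynomial partitioning, Szemer\'edi--Trotter inside cells, Lemma~\ref{lem:incidencesInZeroSet} for lines in $Z(P)$, and recursion on the $O(D^2)$ exceptional lines), but there is one genuine gap: the claim that the regime $m\gtrsim n^{3/2}$ can be dispatched by ``trivial estimates absorbed into the $O(m)+O(n)$ tail.'' In that regime the target bound reduces to $O(m)$, but the only estimate available without further work is the K\H{o}v\'ari--S\'os--Tur\'an bound $O\bigl(\min\{m n^{1/2}+n,\;m^{1/2}n+m\}\bigr)$, and for, say, $m\sim n^2$ this gives $O(n^{5/2})=O(m^{5/4})$, which is far from $O(m)$. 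So the extreme case is not trivial, and moreover your scheme genuinely needs it excluded, since with $D=c\,m^{1/2}n^{-1/4}$ and $m\gg n^{3/2}$ you would have $|L''|\le 11D^2\gg n$, so the recursion makes no progress.

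The paper's fix, which your proposal omits, is to set $D=\lfloor\min\{M^{1/2}N^{-1/4},\;N^{1/2}/10\}\rfloor$. Capping $D$ at roughly $N^{1/2}$ serves two purposes simultaneously: it guarantees $|L''|\le 11D^2\le 11N/100<N/2$, so the induction always strictly decreases the number of lines; and when the cap is active each cell meets only $O(N/D^2)=O(1)$ lines, so the cell contribution is $O(M)$ rather than $O(M^{1/2}N^{3/4})$. This is exactly the mechanism that controls the case $M\gtrsim N^{3/2}$. Once you incorporate the capped choice of $D$, the rest of your argument (the H\"older-summed cell estimate, the B\'ezout bound for lines crossing $Z(P)$, the use of Lemma~\ref{lem:incidencesInZeroSet} for lines inside $Z(P)$, and choosing the constant $c$ so the recursion contracts) matches the paper's proof.
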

\begin{proof}
The proposition is a slight variant of Guth and Katz's point-line incidence bound from \cite{GK}, 
so we just briefly sketch the proof. We prove the result by induction on $|\Lambda|$.  Let $M = |C|$, 
let $L = L(\Lambda),$ and let $N=|L|$. First we can suppose that $N\leq M^2$. If not, then 
Proposition \ref{prop:ptLineGK} follows immediately from the K\H{o}v\'ari-S\'os-Tur\'an theorem 
(see~\cite{AgPa}), because the incidence graph of the points and the lines does not contain $K_{2,2}$ as a subgraph.

Let $D = \lfloor \min\left\{ M^{1/2}N^{-1/4},\ N^{1/2}/10\right\}\rfloor$. 
We can suppose that $N^{1/2}/10$ (and thus $D$) is at least one, since otherwise 
$|\Inov(\Lambda)|\leq 10|C|$ and we are done. Using the polynomial partitioning for varieties 
established by Guth~\cite{Guth}, we can find a polynomial $P\in\reals[x,y,z]$ of degree $\le D$ so that 
$\reals^3\backslash Z(P)$ is a union of $O(D^3)$ open connected sets (such sets are often 
called \emph{cells}), so that each cell contains $O(M/D^3)$ points from $C^*$, and each cell 
is intersected by $O(N/D^2)$ lines from $L$. If $D = N^{1/2}/10$ then each cell intersects $O(1)$ 
lines from $L$. Since each point from $C^*$ is contained in at most one cell, we have in this case
\[
I( C^* \backslash Z(P) ,\Lambda) = O(M).
\]
If $D=M^{1/2}N^{-1/4}$, then standard incidence estimates allow us to bound 
\[
I( C^* \backslash Z(P) ,\Lambda) = O(M^{1/2}N^{3/4}).
\]
Similarly, standard incidence estimates allow us to bound
\[ 
| \{ (c^*,\ell)\in I( C^* \cap Z(P) ,\Lambda)\mid \ell\not\subset Z(P) \}| = O(ND) = O(M^{1/2}N^{3/4}).
\]
Let $L'\subset L$ be the set of lines contained in $Z(P)$. 
Applying Lemma \ref{lem:incidencesInZeroSet}, we obtain a set $L''\subset L'$ with 
$|L''|\leq 11D^2\leq |L|/2$, and
\[
|\Inov \cap I(C^*\cap Z(P), L'\backslash L'')| \leq 2|C^*\cap Z(P)| + D|L'\backslash L''| = O(M^{1/2}N^{3/4} + M). 
\]
Finally, we apply the induction hypothesis to bound
\[
|\Inov \cap I(C^*\cap Z(P), L'')| \leq A( M^{1/2} |L''|^{3/4} + M + |L''|) \leq 2^{-3/4}AM^{1/2}N^{3/4} + A(M + N).
\]
Combining these bounds, we conclude that
\[
|\Inov| \leq 2^{-3/4}AM^{1/2}N^{3/4} + A(M+N) + O(M^{1/2}N^{3/4}),
\]
where the implicit constant is independent of $A$. Selecting $A$ sufficiently large closes the induction.
\end{proof}

Rearranging \eqref{eq:boundOnINov}, we see that if $C$ is a set of $n$ circles and $\Lambda$ is a set of 
pairwise non-overlapping $k$-rich lenses in $C$, then if $n$ is sufficiently large ($n>8A^3$ will suffice), 
then $|\Lambda|=O \left( \frac{n^2}{k^4} + \frac{n}{k}\right)$.
Thus for all values of $n=|C|$, we have
\begin{equation}\label{eq:bdINovGK}
  \deg(\Lambda) = |\Inov(\Lambda)| = O\left( \frac{n^2}{k^3} + n\right).
\end{equation} 
In particular, Theorem \ref{krich52-lens} is true when $k\geq n^{1/3}\log^{-2/3}n$. 

In the next two sections, we will prove Theorem \ref{krich52-lens} when $2<k<n^{1/3}$,
and also give a second proof of a slightly weaker bound. Note that Theorem \ref{krich52-lens} 
consists of two statements: a bound on $|\Lambda|$ and a bound on $\deg(\Lambda)$. The second 
statement immediately implies the first, by dividing the resulting bound by $k$. 
The next lemma shows that the first statement also implies the second.

\begin{lemma} \label{lem:equivalenceAB}
  Suppose that for every set $C$ of circles in the plane and every $k\geq 2$, every set of 
  pairwise disjoint $k$-rich lenses in $C$ has cardinality at most 
  $A \left(\frac{|C|^{3/2}\log {(|C|/k^3)}}{k^{5/2}} + \frac{|C|}{k} \right)$. 
  Then for every set $C$ of circles in the plane, every $k\geq 2$, and every set 
  $\Lambda$ of pairwise disjoint $k$-rich lenses in $C$, we have 
  $\deg(\Lambda)=O\left(\frac{|C|^{3/2}\log{(|C|/k^3)} }{k^{3/2}} + |C| \right)$, 
  where the implicit constant depends only on $A$.
\end{lemma}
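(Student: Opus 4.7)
The plan is a standard dyadic decomposition of $\Lambda$ by lens degree, together with a geometric series summation of the hypothesized cardinality bound. Let $n=|C|$, and for each integer $j\ge 0$ let $k_j = 2^j k$ and
\[
\Lambda_j = \{\lambda\in\Lambda \mid k_j \le \deg(\lambda) < 2k_j\}.
\]
Each $\Lambda_j$ is a set of pairwise non-overlapping $k_j$-rich lenses in $C$, so the hypothesis applied to $\Lambda_j$ with richness parameter $k_j$ gives
\[
|\Lambda_j| \le A\left(\frac{n^{3/2}\log(n/k_j^3)}{k_j^{5/2}} + \frac{n}{k_j}\right),
\]
and multiplying by $2k_j$ (an upper bound on the degree of each lens in $\Lambda_j$) yields
\[
\deg(\Lambda_j)\le 2A\left(\frac{n^{3/2}\log(n/k_j^3)}{k_j^{3/2}} + n\right).
\]

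The main step is to sum this over $j$. Split at $j^{\ast}=\lfloor\log_2(n^{1/3}/k)\rfloor$, so that $k_j < n^{1/3}$ precisely when $j\le j^{\ast}$. For $j\le j^{\ast}$, the log factor is positive and bounded by $\log(n/k^3)$, and $k_j^{-3/2}=2^{-3j/2}k^{-3/2}$, so the first summand forms a geometric series whose total is $O\!\left(\frac{n^{3/2}\log(n/k^3)}{k^{3/2}}\right)$; the second summand contributes $O(n\log(n/k^3))$, which is dominated by the first whenever $k\le n^{1/3}$ (since then $n^{3/2}/k^{3/2}\ge n$). For $j>j^{\ast}$, every lens in $\Lambda_j$ has degree $\ge n^{1/3}$, hence the union $\Lambda_{>}:=\bigcup_{j>j^{\ast}}\Lambda_j$ is a set of pairwise non-overlapping $\lceil n^{1/3}\rceil$-rich lenses. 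Applying \eqref{eq:bdINovGK} to $\Lambda_{>}$ with $k=n^{1/3}$ gives
\[
\deg(\Lambda_{>}) = |\Inov(\Lambda_{>})| = O\!\left(\frac{n^2}{(n^{1/3})^3} + n\right) = O(n).
\]
Summing the two parts yields $\deg(\Lambda)=O\!\left(\frac{n^{3/2}\log(n/k^3)}{k^{3/2}} + n\right)$, as required.

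The argument is essentially routine, so there is no serious obstacle, but one small subtlety must be handled with care: when $k\ge n^{1/3}$ the factor $\log(n/k^3)$ is nonpositive and the hypothesized bound degenerates, so one must avoid applying the hypothesis in this regime. Treating the high-degree tail using the already-established incidence bound \eqref{eq:bdINovGK} (which holds unconditionally) sidesteps this issue cleanly. The only other point to verify is that dominating $n\log(n/k^3)$ by $\frac{n^{3/2}\log(n/k^3)}{k^{3/2}}$ is valid throughout the range $k<n^{1/3}$, which is immediate from $n^{1/2}\ge k^{3/2}$ in that range.
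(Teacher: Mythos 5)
Your proof is correct and follows essentially the same strategy as the paper's: dyadically decompose $\Lambda$ by lens degree, apply the hypothesized cardinality bound to each dyadic layer (with the appropriate richness parameter), sum the resulting geometric series, and dispose of the $\ge n^{1/3}$-rich tail via the unconditional bound \eqref{eq:bdINovGK}. The only difference is cosmetic: you index the dyadic classes upward from $k$ (as $k_j = 2^j k$) while the paper indexes downward from $n^{1/3}$, and you bound $\log(n/k_j^3)\le\log(n/k^3)$ uniformly rather than tracking the shrinking log, but both lead to the same geometric sum and the same final estimate.
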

\begin{proof}
Let $C$ be a set of $n$ circles in the plane. Let $k\geq 2$ and let $\Lambda$ be a 
set of pairwise disjoint $k$-rich lenses in $C$. If $k\geq n^{1/3}$, then by \eqref{eq:bdINovGK} 
we have $\deg(\Lambda)=O(n)$ and we are done. 

Suppose now that $2 \leq k \leq n^{1/3}$. Let $\Lambda_0 \subset\Lambda$ be the set of lenses 
that are $n^{1/3}$-rich. Let $j_0$ be the smallest integer so that $2^{-j_0}n^{1/3} \leq k$, 
and for each $j=1,\ldots,j_0$, let $\Lambda_j \subset \Lambda \backslash \bigcup_{i=0}^{j-1}\Lambda_j$ 
be the set of lenses that are $2^{-j}n^{1/3}$-rich. By construction, $\Lambda=\bigsqcup_{j=0}^{j_0}\Lambda_j$, 
and for each index $1\leq j\leq j_0$, the lenses in $\Lambda_j$ have degree between $2^{-j}n^{1/3}$ 
and $2^{-j+1}n^{1/3}$. Thus
\begin{equation*}
  \begin{split}
    \deg(\Lambda) & = \deg(\Lambda_0)+\sum_{j=1}^{j_0}\deg(\Lambda_j)\\
    &\leq \deg(\Lambda_0) + \sum_{j=1}^{j_0}(2^{-j+1}n^{1/3})|\Lambda_j|\\
    &\leq O(n) + \sum_{j=1}^{j_0}(2^{-j+1}n^{1/3})\big( \frac{A n^{3/2}\log {2^{3j} } }{( 2^{-j}n^{1/3} )^{5/2}}\big)\\
    & = O(n) + O\left( 2^{\frac{3}{2}j_0} \cdot \frac{n^{3/2} \log {2^{3j_0}} }{n^{1/2}}\right) \\
    & = O\left(\frac{n^{3/2}\log {(n/k^3)}}{k^{3/2}}+ n\right) ,
  \end{split}
\end{equation*}
where the implicit constant depends on $A$. In the third line we used \eqref{eq:bdINovGK} to bound $\deg(\Lambda_0)=O(n)$.
\end{proof}

\begin{remark}\label{rem:disjointPtsAssumption}
Recall that at the beginning of this section, we added the assumption that no two distinct lenses 
in $\Lambda$ share the same pair $\{p,q\}$ of endpoints. We can now explain why this assumption is harmless. 
Indeed, let $C$ be a set of circles and let $\Lambda$ be a set of pairwise non-overlapping $k$-rich lenses in $C$. 
Let $\Lambda'$ be the set of lenses formed by ``merging'' all lenses in $\Lambda$ that share common
endpoints, i.e., if $\lambda_{p,q}(C')$ and $\lambda_{p,q}(C'')$ are $k$-rich lenses in $\Lambda$, 
then $\lambda_{p,q}(C'\sqcup C'')$ will be an element of $\Lambda'$. While $|\Lambda'|$ might be 
smaller than $|\Lambda|$, we have $\deg(\Lambda')=\deg(\Lambda)$. To summarize: if we can prove that 
every set of $k$-rich lenses with distinct pairs of endpoints has cardinality 
$O\left(\frac{|C|^{3/2}\log {(|C|/k^3)}}{k^{5/2}} + \frac{|C|}{k} \right)$, 
then this implies that every set $\Lambda'$ of $k$-rich lenses with distinct endpoints 
has degree $\deg(\Lambda') = O\left(\frac{|C|^{3/2}\log{(|C|/k^3)} }{k^{3/2}} + |C| \right)$. 
This implies the same bound for any set $\Lambda$ of $k$-rich lenses (i.e.,
the distinct endpoint requirement can be dropped).
\end{remark}

\section{First proof of Theorem \ref{krich52-lens}: Reduction to small $k$} \label{sec:first}

Let $C$ be a set of $n$ circles in the plane, and let $\Lambda$ be a set of pairwise non-overlapping 
$k$-rich lenses in $C$. Let $C^*$, $L=L(\Lambda)$, and $\Inov$ be as defined in Section \ref{sec:prelim}, 
and let $2\leq k\leq n^{1/3}$. By Lemma \ref{lem:equivalenceAB}, to prove Theorem \ref{krich52-lens} 
it suffices to show that $|\Lambda| = O\left(\frac{n^{3/2}\log {(n/k^3)}}{k^{3/2}}\right)$. 
Let $\alpha>0$ be a small absolute constant that will be specified below. We will suppose 
that $2/\alpha\leq k \leq \frac{1}{10}n^{1/3}$, since otherwise Theorem \ref{krich52-lens} 
follows from \eqref{eq:bdINovGK}. 

We can assume that $|\Lambda|\geq 16n/k$, since otherwise we are done. This implies that 
$|\Lambda|\geq 100k^2$ 
Let $D = \alpha k$. As in \cite{GK}, we construct a partitioning polynomial 
$f$ of degree $O(D)$, so that each of the $O(D^3)$ cells of $\reals^3\setminus Z(f)$ contains 
at most $n/D^3$ points of $C^*$ (note that some points of $C^*$ might lie on the zero set $Z(f)$).

Let $L'\subset L$ be the set of lines contained in $Z(f)$. By Lemma \ref{lem:incidencesInZeroSet}, 
there is a set $L''\subset L'$ with $|L''|\leq 11 \deg(f)^2=O(\alpha^2k^2)$ so that 
\[
|\Inov \cap I(C^*\cap Z(P), L'\backslash L'')| \leq 2|C^*\cap Z(P)| + D|L'\backslash L''|\leq 2n + \alpha k |L'\backslash L''|.
\]
Recall that $|L|=|\Lambda|\geq 100k^2$, and thus if $\alpha>0$ is chosen sufficiently small then 
$|L''|\leq |L|/4$. Since each line in $L'\backslash L''$ participates in at least $k$ incidences in $\Inov$, we have
\[
|L'\backslash L''| \leq \frac{1}{k}(2n + \alpha k |L'\backslash L''|) \leq \frac{2n}{k} + \alpha|L|\leq \frac{|L|}{4},
\]
where the final inequality follows by choosing $\alpha < 1/8$ and by using 
the assumption that $|\Lambda|\geq 16n/k$. We conclude that $|L\backslash L'|\geq |L|/2$. 
Next, each $\ell\in L\backslash L'$ participates in at least $k$ 
incidences in $\Inov$, at least $k-\deg(f) \geq (1-O(\alpha))k$ of which must be inside the
cells of $\reals^3\setminus Z(f)$. We say an incidence $(c^*,\ell)\in \Inov$ is \emph{lonely}
if $c^*$ is inside a cell of $\reals^3\setminus Z(f)$, and $(c^*,\ell)$ is the only incidence 
in $\Inov$ involving $\ell$ that occurs inside that cell (i.e., there are no other points of 
$C^*$ on $\ell$ inside that cell, so in the primal plane this implies that this configuration 
does not form a lens). Since each $\ell\in L\backslash L'$ intersects at most 
$\deg(f)+1\leq \alpha k+1$ cells, each $\ell\in L\backslash L'$ participates in at least 
$(1-\alpha)k-1$ incidences in $\Inov$ that are not lonely. Let $\Inovp$ be the set 
of incidences $(c^*,\ell)\in \Inov$ where $c^*$ is inside a cell of $\reals^3\setminus Z(f)$, 
and the incidence is not lonely. Then if $\alpha>0$ is selected sufficiently small, we have
\begin{equation}\label{eq:lowerBdInov}
  |\Inovp| \geq |L\backslash L'|(k/2)\geq \frac{1}{4}k|L|.
\end{equation}
On the other hand, Theorem \ref{thm:MT} says that there are $O\big( (n/D^3)^{3/2} \log(n/D^3)\big)$ 
non-lonely incidences inside each cell. Thus
\begin{equation}\label{eq:upperBdInov}
  |\Inovp| = O\left(D^3 \left(\frac{n}{D^3}\right)^{3/2}\log\big(\frac{n}{D^3}\big)\right)=  
  O\left(\frac{n^{3/2}\log{(n/k^3)}}{k^{3/2}}\right) ,
\end{equation}
where the implicit constant depends on $\alpha$.
Combining \eqref{eq:lowerBdInov} and \eqref{eq:upperBdInov}, we conclude that
\[
|L| = O\left( \frac{n^{3/2}\log {(n/k^3)}}{k^{5/2}}\right).
\]
This completes the first proof of Theorem~\ref{krich52-lens}.

\medskip
\noindent{\bf Remark.}
It is an interesting challenge to extend the analysis in this subsection from circles to
more general families of algebraic curves.  
This topic will be discussed in Section \ref{sec:disc}.

\section{Second proof of Theorem \ref{krich52-lens}: Reduction to large $k$}
\label{sec:second}

In this section we prove a slightly weaker version of Theorem~\ref{krich52-lens}
using a different proof technique. We feel that each of the techniques
is interesting in its own right, and that each has the potential of being
extended into different and more general contexts.

Most of the analysis in this section extends to more general algebraic curves, except for 
one (significant) step. We will discuss possible generalizations in Section \ref{sec:disc}.

Define $F(n,k)$ to be the smallest integer with the following property: Let $C$ be a set of 
at most $n$ circles in the plane; let $\Lambda$ be a set of pairwise disjoint $k$-rich lenses in $C$. 
Then $\deg(\Lambda)\leq F(n,k)$. Note that $F(n,1)=\infty$ (i.e., it is undefined for $k=1$),
and, trivially, $F(n,k)=O(n^2)$ for all $k\geq 2$. 
Furthermore, $F(n,k)$ is monotone increasing in $n$ and monotone decreasing in $k$. Abusing notation 
slightly, we extend our definition of $F(n,k)$ to all real numbers $n\geq 1$ and $k\geq 2$ by defining 
$F(n,k) = F(\lceil n\rceil, \lfloor k \rfloor)$. 

In Section \ref{sec:prelim} we proved that $F(n,k)=O(n^2/k^3 + n)$, and in particular 
there is an absolute constant $A_0$ so that, for any $z > 1$,
\begin{equation}\label{startingBd}
  F(k^3 z,k) \leq A_0 k^3 z^2.
\end{equation}
In this section we will establish the following recurrence relation for $F(n,k)$.
\begin{lemma} \label{lem:recurrenceForF}
  For any $D\geq 1$ we have
  \begin{equation}\label{eq:FnkRecurrence}
  F(n,k)\leq A D^3 F(n/D^2, k/3)+ F(AD^2,k/3)+A D^2 n ,
  \end{equation}
  for a suitable absolute constant $A$.
\end{lemma}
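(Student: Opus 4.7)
The plan is to apply polynomial partitioning in $\reals^3$ to the point set $C^*$ (which corresponds to the circles of $C$) and recursively invoke the definition of $F$ on each resulting piece. Using Guth's polynomial partitioning theorem from \cite{Guth}, construct a polynomial $f\in\reals[x,y,z]$ of degree $O(D)$ whose zero set $Z(f)$ decomposes $\reals^3\setminus Z(f)$ into $O(D^3)$ open cells, each containing at most $O(n/D^3)\le n/D^2$ points of $C^*$. Split the line set $L=L(\Lambda)$ into $L'=\{\ell\in L:\ell\subseteq Z(f)\}$ and its complement $L_{\mathrm{tr}}$.

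For the leading term $AD^3 F(n/D^2,k/3)$: in each cell $\Omega$, consider the sub-arrangement of circles whose dual points lie in $\Omega$ (at most $n/D^2$ of them) together with the sub-family of lenses in $\Lambda$ whose associated lines meet at least $k/3$ points of $C^*\cap\Omega$. These lenses inherit pairwise non-overlappingness from $\Lambda$ and are $k/3$-rich with respect to the sub-arrangement, so by the definition of $F$ the sum of their local degrees in $\Omega$ is at most $F(n/D^2,k/3)$. Summing over the $AD^3$ cells yields the leading term.

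For the $F(AD^2,k/3)$ contribution: apply Lemma~\ref{lem:incidencesInZeroSet} to the incidences of $L'$ with $C^*\cap Z(f)$, producing a subset $L''\subseteq L'$ with $|L''|\le 11D^2\le AD^2$ of ``exceptional'' lines. The residual contribution of $L'\setminus L''$ is bounded by $2|C^*\cap Z(f)|+D|L'\setminus L''|$ via the same lemma, and will be absorbed into the $AD^2 n$ error term. The $L''$ lines themselves form a sub-instance of our problem --- pairwise non-overlapping $k/3$-rich lenses, in a sub-arrangement of at most $AD^2$ effective circles --- whose total degree is bounded by $F(AD^2,k/3)$ via the inductive definition of $F$.

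The $AD^2 n$ additive error term collects all remaining contributions: incidences of $L_{\mathrm{tr}}$-lines with points on $Z(f)$ (each such line contributes $O(D)$ such incidences, summing to $O(D|L|)$), incidences inside cells contributed by lenses that fail to be $k/3$-rich in any cell they visit (each such line visits $O(D)$ cells and contributes $O(Dk)$ low-richness incidences per line), together with the $L'\setminus L''$ residual. The main obstacle I expect is the careful bookkeeping required to make these residual contributions collectively fit into $O(D^2 n)$ without introducing a self-referential dependence on $F(n,k)$ or $|L|$; this accounting will rely on the pairwise non-overlapping property together with an averaging argument over cells that converts per-line bounds into aggregate bounds of the desired form.
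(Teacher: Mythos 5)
Your proposal works in the dual $C^*/L(\Lambda)$ picture from Section~\ref{sec:prelim}, but the paper's proof of Lemma~\ref{lem:recurrenceForF} instead works with the lifted curves $\gamma(c)\subset\reals^3$ (the Ellenberg--Solymosi--Zahl slope-lift), partitioning so that at most $n/D^2$ of the \emph{curves} $\gamma(c)$ cross each cell, and using the Order Reversal Property together with an auxiliary polynomial $g=\tilde g(x,y)$ of degree $O(D^2)$ built from the resultant of $f$ and $\partial_z f$. These ingredients have no analogues in your framework, and their absence is exactly where your argument breaks down.

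The most serious gap is the $F(AD^2,k/3)$ term. You obtain a set $L''$ of at most $11D^2$ lines via Lemma~\ref{lem:incidencesInZeroSet} and then claim these lenses form ``a sub-instance \ldots in a sub-arrangement of at most $AD^2$ effective circles.'' But $L''$ is a set of $O(D^2)$ \emph{lenses}, not circles; the circles meeting these lenses can be any of the $n$ circles in $C$, so the first argument of $F$ cannot legitimately be dropped to $AD^2$. In the paper this reduction is genuine: a lens ``preserved by $Z(g)$'' has at least $|C'|/3$ of its lifted curves contained in $Z(g)$, and since $g(x,y,z)=\tilde g(x,y)$ has degree $O(D^2)$, at most $O(D^2)$ circles can satisfy $c\subset Z(\tilde g)$; the sub-instance really does involve $O(D^2)$ circles. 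Your $L''$-based surrogate proves nothing of the kind, and indeed the role of $L''$ in Lemma~\ref{lem:incidencesInZeroSet} (removing the lines in the non-ruled components) is unrelated to the $F(AD^2,k/3)$ term in this recurrence.

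The $AD^2n$ term is also not recoverable by your accounting. Your low-richness incidences inside cells come to $O(Dk)$ per transversal line, i.e.\ $O(Dk\,|L|)$ in total, which is precisely the self-referential dependence on $|L|$ that you flag as a concern, and there is no averaging argument available that removes it: nothing in the dual picture prevents a transversal line from threading $\Theta(D)$ cells with $\Theta(k)$ points in each without ever becoming $k/3$-rich in a single cell. The paper sidesteps this entirely. A lens that is neither preserved in a cell nor preserved by $Z(g)$ is shown---via the Order Reversal Property for the circles in $Z(f)\setminus Z(g)$, and via a cell-by-cell argument for the rest---to have at least $|C'|/3$ lifted \emph{arcs} properly crossing $Z(f)$ or $Z(g)$. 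Each circle's lifted curve has degree $O(1)$ and so crosses $Z(f)\cup Z(g)$ in $O(\deg f+\deg g)=O(D^2)$ points in total, and the pairwise non-overlapping hypothesis guarantees each such crossing is charged to at most one lens. This charges the error to the $n$ \emph{circles}, yielding $O(D^2 n)$ cleanly, rather than to the (potentially much larger and unknown) number of lenses. Without the lift, the Order Reversal Property, and the circle-charging scheme, your framework has no way to produce the $AD^2 n$ term.
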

Before proving Lemma \ref{lem:recurrenceForF}, we show that it implies 
\begin{equation}\label{eq:boundOfFnk}
  F(n,k) = O\left(\frac{n^{3/2}\log^b (n/k^3)}{k^{3/2}}\right),
\end{equation}
for some constant $b$ and for all $n > k^3$. To show this, we solve the recurrence in the lemma
in several steps. First, given $n$ and $k$, we construct a sequence of real numbers $n_0,n_1,\ldots,n_s = n$,
where $n_0 = k^3 z$, for a suitable value of $z > 1$ (the actual value will be between $\sqrt 2$ and $2$,
and its concrete choice will be given towards the end of the forthcoming analysis), and $n_{j+1} = n_j^2/k^3$,
for $j\ge 0$. That is, $n_j = k^3 z^{2^j}$ for $j\ge 0$, as is easily verified by induction on $j$. 
Since we want $n_s$ to be equal to $n$, we have
$z^{2^j} = n/k^3$. We also define, for each $j$, $D_j := n_j^{1/2}/k^{3/2} = z^{2^{j-1}}$, 
and note that $n_{j+1} = D_j^2 n_j$. The rationale for choosing these sequences will become 
clear as the solution of the recurrence unfolds. We have
\begin{equation} \label{djnj}
\frac{n_{j+1}^{3/2}}{k^{3/2}} = \frac{D_j^3 n_j^{3/2}}{k^{3/2}} = \frac{D_j^2 n_j^2}{k^3} = D_j^2 n_{j+1} .
\end{equation}
Note that $AD_j^2= A n_j / k^3$. For simplicity, we shall suppose that $k\geq A^{1/3}$ and thus $AD_j^2 \leq n_j$
(if this inequality failed then Theorem \ref{krich52-lens} follows from Theorem \ref{thm:MT}, since
$k$ becomes a constant).

We next prove that for each $j\geq 0$ we have
\begin{equation} \label{eq:f}
  F(n_j,3^jk) \leq A_0 z^{1/2}(3A)^j n_j^{3/2}/k^{3/2} , 
\end{equation}
where $A_0$ is the constant from \eqref{startingBd}. The case $j=0$ is precisely \eqref{startingBd}. 
For the induction step, we compute, using Lemma~\ref{lem:recurrenceForF}:
\begin{equation*}
\begin{split}
F(n_{j+1}, 3^{j+1} k) & \leq A D_j^3 F(n_{j+1}/D_j^2, 3^j k) + F(AD_j^2, 3^j k)+A D_j^2 n_{j+1}\\
& \leq  2A D_j^3 F(n_{j}, 3^j k) + A D_j^2 n_{j+1}\\
& \leq 2A D_j^3 (A_0 z^{1/2}) (3A)^j n_j^{3/2} / k^{3/2} + A D_j^2 n_{j+1}\\
& \le 2\cdot 3^j (A_0 z^{1/2}) A^{j+1} n_{j+1}^{3/2} / k^{3/2} + A D_j^2 n_{j+1}\\
& \leq A_0 z^{1/2} (3A)^{j+1} n_{j+1}^{3/2} / k^{3/2} ,
\end{split}
\end{equation*}
where in the last inequality we used the equality (\ref{djnj}), namely $D_j^2 n_{j+1} = n_{j+1}^{3/2}/k^{3/2}$.

Thus if $n > k^3$, we can find $z > 1$ and $s$ so that $n_s = k^3 z^{2^s} = n$.
That is, $2^s = \frac{\log (n/k^3)}{\log z}$ or $s = \log\log(n/k^3) - \log\log z$.
We use (\ref{eq:f}), with $j=s$, $n_j = n$ and with replacing $k$ by $k/3^s$, and obtain
\[
F(n,k) \leq A_0 z^{1/2}(3^{5/2}A)^s \frac{n^{3/2}}{k^{3/2}} .
\]
Putting $B := 3^{5/2}A$ and $b := \log B$, we get
\[
(3^{5/2}A)^s = B^s = (2^s)^b = 
\left( \frac{\log (n/k^3)}{\log z} \right)^b = 
\frac{\log^b (n/k^3)}{\log^b z} ,
\]
and hence
\[
F(n,k) \leq A_0 z^{1/2} B^s \frac{n^{3/2}}{k^{3/2}} =
\frac{A_0 z^{1/2} }{\log^b z} \cdot
\frac{n^{3/2}\log^b (n/k^3)}{k^{3/2}} .
\]
It remains to determine the value of $z$. Put $z_j = (n/k^3)^{1/2^j}$, for $j\ge 0$.
This sequence converges to $1$ and satisfies $z_j = \sqrt{z_{j-1}}$ for each $j$.
We take $s$ to be that (unique) value of $j$ for which $\sqrt{2} < z_j \le 2$
(for such a $z$ to exist we need to assume that $n > k^3\sqrt{2}$). We then have
\[
\frac{A_0 z^{1/2} }{\log^b z} \le 
A_1 := \frac{A_0\sqrt{2} }{\log^b \sqrt{2}} , \qquad\text{and so}\qquad
F(n,k) \leq A_1 \frac{n^{3/2}\log^b (n/k^3)}{k^{3/2}} .
\]
This establishes (\ref{eq:boundOfFnk}), and leaves us with the task of
proving Lemma \ref{lem:recurrenceForF}. 

\begin{proof}[Proof of Lemma \ref{lem:recurrenceForF}]
Let $C$ be a set of $n$ circles in the plane and let $\Lambda$ be a set of pairwise non-overlapping 
$k$-rich lenses in $C$. Following the technique of Ellenberg, Solymosi, and Zahl \cite{ESZ}, 
for each circle $c\in C$ with defining polynomial $g$ (i.e., $c = Z(g)$), consider the algebraic variety
\[
\{ (x,y,z) \in \reals^3 \mid g(x,y) = 0,\ z \partial_y g(x,y) + \partial_x g(x,y) = 0\}.
\] 
As discussed in \cite{ESZ} (see also \cite{SZ}), this variety is a union of three irreducible 
curves in $\reals^3$, two of which are vertical lines (one above each of the points in $c$ 
where the circle has infinite slope). Define $\gamma(c)\subset\reals^3$ to be the irreducible 
component that is not a vertical line. If $(x,y)\in c$ is a point where $c$ has finite slope, 
then $(x,y,z)\in\gamma(c)$ if and only if $c$ has slope $z$ at $(x,y)$. In particular, if 
$\lambda_{p,q}(C')$ is a lens and if $c\in C'$, then the (shorter) arc $\beta\subset c$ with 
endpoints $p$ and $q$ lifts to a curve segment $\gamma(\beta)\subset \gamma(c)$. We will call 
this curve segment the lifted arc of $c$ corresponding to the lens $\lambda$. 

For a set of circles $C$, define $\gamma(C) = \{\gamma(c)\mid c\in C\}$. Let $\Lambda_{p,q}(C')$ 
be a lens in $C$, and suppose that none of the circles $c\in C'$ have infinite slope at the point $p$ or $q$ 
(this is a harmless assumption, since at most two circles containing $p$ and $q$ can have infinite 
slope at $p$ or $q$). Let $\ell_p,\ell_q\subset\reals^3$ be vertical lines passing through $(p,0)$ 
and $(q,0)$ respectively. Then each of the curves in $\gamma(C')$ intersect $\ell_p$ and $\ell_q$. 
Furthermore, each of the intersection points $\{\gamma(c)\cap \ell_p \mid c \in C'\}$ are distinct, 
and similarly for $\ell_q$. Define $z(\gamma(c)\cap \ell_p)$ to be the $z$-coordinate of 
$\gamma(c)\cap \ell_p$. The curves in $\gamma(C')$ have the following property:

\medskip
\noindent \textbf{Order Reversal Property}. If we order the curves $c_1,\ldots,c_m\in C'$ so that 
\[
z(\gamma(c_1)\cap \ell_p)<z(\gamma(c_2)\cap \ell_p)<\cdots<z(\gamma(c_m)\cap \ell_p),
\]
then
\[
z(\gamma(c_1)\cap \ell_q)>z(\gamma(c_2)\cap \ell_q)>\cdots>z(\gamma(c_m)\cap \ell_q),
\]
i.e., the order on $C'$ given by the $z$-coordinates of $\gamma(c)\cap\ell_p$ is precisely 
the reverse of the order given by $\gamma(c)\cap\ell_q$. See Figure \ref{multicyc}.

\begin{figure}[htb]
  \begin{center}
    \input{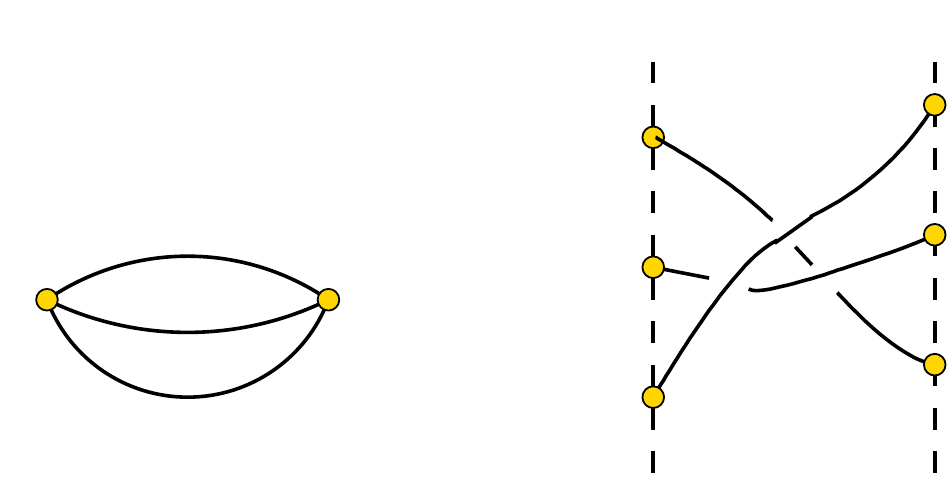_t}
    \caption{A lens is lifted to a multi-$2$-cycle in three dimensions.} 
    \label{multicyc}
  \end{center}
\end{figure}

We employ the approach of Aronov and Sharir~\cite{ArS}, as detailed in
Sharir and Zahl~\cite{SZ}, with some modifications, as follows. We construct a partitioning 
polynomial $f$, of degree $O(D)$, so that we have  $O(D^3)$ open connected cells of 
$\reals^3\setminus Z(f)$, and at most $n/D^2$ curves from $\Gamma(C)$ intersect each cell. 
The existence of such a partitioning polynomial was established in Guth~\cite{Guth}. 
For each cell $O$ of $\reals^3\backslash Z(f)$, define 
$C_O = \{ c\in C\mid \gamma(c) \cap O \neq\emptyset\}$. 

For a cell $O\subset \reals^3\setminus Z(f)$, we say that a lens $\lambda=\lambda_{p,q}(C')\in\Lambda$ 
is \emph{preserved} within $O$ if for at least $\deg(\lambda)/3=|C'|/3$ circles $c\in C'$, the lifted 
arc of $c$ corresponding to the lens $\lambda$ is contained in $O$. In particular, if $\lambda_{p,q}(C')$ 
is preserved within $O$, then $|C'\cap C_O|\geq |C'|/3\geq k/3$. Thus for each cell $O$, we have
\[
\sum \deg(\lambda_{p,q}(C') ) \leq 3 \sum|C'\cap C_O| \leq 3 F(n/D^2,k/3),
\] 
where the sum is taken over all lenses $\lambda_{p,q}(C')$ that are preserved within $O$. 
Summing over all cells $O$, we conclude that
\begin{equation} \label{eq:bd1}
\sum_{\lambda\ \textrm{preserved within a cell}}\deg(\lambda)=O(D^3)F(n/D^2,k/3).
\end{equation} 
If a lens is not preserved within any cell, we say that it is \emph{disrupted} by $Z(f)$. 
It remains to bound the sum of the degrees of the disrupted lenses. The arguments here 
are very similar to those in \cite{SZ}, so we just sketch them briefly and highlight 
the key differences. First, for each $(x,y,z)\in\reals^3$, define $h(x,y,z)$ to be the number 
of intersections between $Z(f)$ and the infinite ray $\{(x,y,t)\mid t>z\}$. This quantity 
is finite (indeed bounded by $\deg f$) unless the vertical line passing through $(x,y,z)$ 
is contained in $Z(f)$. Following the arguments in \cite{ArS,SZ}, there is a polynomial 
$g\in\reals[x,y,z]$ of degree $O(D^2)$ with the following properties.
\begin{itemize}
\item 
  $h$ is constant on each connected component of $\reals^3\backslash \big(Z(f) \cup Z(g)\big)$.
\item 
  $h$ is constant on $Z(f) \backslash Z(g)$.
\item 
  $g(x,y,z)$ is independent of $z$, i.e., $g(x,y,z) = \tilde g(x,y)$ for some polynomial 
  $\tilde g(x,y)\in\reals[x,y]$.  
\item 
  If $Q(x,y,z)$ is an irreducible component of $f$ that is independent of $z$, then $Q$ divides $g$, 
  i.e., $Q$ is also an irreducible component of $g$.
\end{itemize} 
In brief, the polynomial $g(x,y,z) = \tilde g(x,y)$ is constructed by computing the resultant of $f$ and $\partial_z f$; see \cite{ArS,SZ} for details.

We say that a lens $\lambda_{p,q}(C')$ is preserved by $Z(g)$ if at least $\deg(\lambda)/3=|C'|/3$ 
of the curves from $\gamma(C')$ are contained in $Z(g)$. Recall that $g(x,y,z) = \tilde g(x,y)$, 
and thus if $\gamma(c)\subset Z(g)$, we must have $c\subset Z(\tilde g)$. In particular, at most 
$\deg(g) = \deg(\tilde g) = O(D^2)$ circles from $C$ can be contained in $Z(g)$. Arguing as before, we conclude that
\begin{equation} \label{eq:bd2}
\sum_{\lambda\ \textrm{preserved by}\ Z(g)}\deg(\lambda)\leq 3F(O(D^2), k/3).
\end{equation} 
It remains to bound the sum of the degrees of the lenses that are disrupted by $Z(f)$ and not 
preserved by $Z(g)$. We claim that if $\lambda_{p,q}(C')$ is such a lens, then there are at least 
$|C'|/3$ circles $c\in C'$ so that the lifted arc of $c$ corresponding to the lens $\lambda$ properly 
intersects $Z(f)$ or properly intersects $Z(g)$. Once this has been established we are done, since 
the number of such proper intersections is at most $n(\deg f + \deg g) = O(D^2n)$, and since the 
lenses are pairwise non-overlapping, each such intersection is counted towards at most one lens. 

To verify this claim, let $\lambda_{p,q}(C')$ be a lens that is disrupted by $Z(f)$ and not 
preserved by $Z(g)$. We will divide our argument into cases. 

\medskip

\noindent{\bf Case 1}: At least $|C'|/3$ of the lifted circles in $C$ are contained in $Z(f)$ but not contained in $Z(g)$.\\
Enumerate the circles contained in $Z(f)$ but not contained in $Z(g)$ as $c_1,\ldots,c_w$, for some
$w\geq |C'|/3$, so that $z(\gamma(c_1)\cap \ell_p) < \cdots < z(\gamma(c_w)\cap \ell_p)$. 
Since each circle $c_i$ is contained in $Z(f)$ but not contained in $Z(g)$, we have
\[
h(\gamma(c_1)\cap \ell_p)< \cdots < (\gamma(c_w)\cap \ell_p).
\]
However, by the Order Reversal Property, we have $z(\gamma(c_1)\cap \ell_p) > \cdots > z(\gamma(c_w)\cap \ell_p)$, and thus
\[
h(\gamma(c_1)\cap \ell_p)> \cdots > (\gamma(c_w)\cap \ell_p).
\]
Since $h$ is constant on $Z(f) \backslash Z(g)$, we conclude that for all but at most 
one index $i$, the lifted arc of $c_i$ corresponding to the lens $\lambda$ intersects $Z(g)$. 

\medskip

\noindent{\bf Case 2}: At least $2|C'|/3$ of the circles in $C'$ are not contained in $Z(f)$ or $Z(g)$.\\
Let $c_1,\ldots,c_w$, for some $w \geq 2|C'|/3$, be circles in $C'$ whose lifted curve 
is not contained in $Z(f)$ or $Z(g)$. 
For each index $i=1,\ldots,w$, let $\beta_i$ be the (shorter) arc of $c_i$ with endpoints $p$ and $q$. 
Let $v$ be the number of arcs $\gamma(\beta_i)$ that properly intersect $Z(f)$; 
if $v\geq |C'|/3$ then we are done. If not, then at least $w-v$ of the arcs $\gamma(\beta_i)$ 
are contained inside a cell of $Z(f)$ (though different arcs might be contained inside different cells). 
But an argument analogous to the one above shows that all of the arcs in all
but one of the cells must properly intersect $Z(g)$ Since no cell contains more 
than $|C'|/3$ of the lifted arcs, at least $w-v-|C'|/3$ of the lifted arcs must 
properly intersect $Z(g)$. We conclude that $v$ arcs properly intersect $Z(f)$ 
and at least $w-v-|C'|/3\geq |C'|/3 - v$ arcs properly intersect $Z(g)$. 
Thus at least $|C'|/3$ arcs properly intersect either $Z(f)$ or $Z(g)$. 

Combining the bounds in (\ref{eq:bd1}), (\ref{eq:bd2}), adding the overhead $O(D^2n)$,
and making the constants in the $O(\cdot)$ notation explicit, bounding all of them
by the same constant $A$, we obtain the recurrence asserted in the lemma.
\end{proof}

\section{Point-circle and lens-circle incidence bounds}
\label{sec:incidence_bounds}
\subsection{Point-circle incidence bounds}

We can use Theorem~\ref{krich52-lens} to bound the number of incidences 
between $m$ points and $n$ circles in the plane. 
As it turns out, the bound that we get is the same as the best 
known bound due to Agarwal et al.~\cite{ANPPSS} (and to \cite{MT}). 
We describe the derivation nonetheless, as an illustration of the power of
Theorem~\ref{krich52-lens}.

Let $P$ be a set of $m$ points and let $C$ be a set of $n$ circles. We fix a 
parameter $k$, to be determined below, and use a modified variant of Sz\'ekely's 
technique~\cite{Sze}. We first construct a graph $G$ whose vertices are the points
of $P$, and whose edges connect pairs of consecutive points along each circle of $C$.
Some edges of $G$ form $k$-rich lenses, and we observe that these lenses are pairwise 
non-overlapping. Let $\Lambda$ denote the set of these lenses. We split $G$ into two
subgraphs $G_0$ and $G_1$, where $G_1$ consists of all the edges in the lenses of $\Lambda$
and $G_0$ consists of all the remaining edges. 

By Theorem~\ref{krich52-lens}, the number of edges of $G_1$ is
${\displaystyle O\left( \frac{n^{3/2}\log (n/k^3)}{k^{3/2}} + n \right)}$.

The number $E_0(c)$ of edges of $G_0$ along a circle $c$ is $|N_c| - E_1(c)$, 
where $N_c = P\cap c$ and $E_1(c)$ is the number of edges of $G_1$ along $c$.
Note that the multiplicity of each edge of $G_0$ is smaller than $k$.
An upper bound on the number of edges of $G_0$ then follows from a variant of 
Sz\'ekely's technique (see Theorem 7 of \cite{Sze}), which takes into account 
the maximum multiplicity of an edge in the graph (which is smaller than $k$). 
Concretely, denoting by $|G_0|$ (resp., $|G_1|$) the number of edges of $G_0$ 
(resp., $G_1$), we have 
\[
|G_0| = O\left( k^{1/3}m^{2/3}n^{2/3} + km + |G_1| \right) = 
O\left( \frac{n^{3/2}\log (n/k^3)}{k^{3/2}} + k^{1/3}m^{2/3}n^{2/3} + km + n \right) 
\]
(this expression actually bounds the size of the whole graph, namely $|G| = |G_0|+|G_1|$),
and we balance the first two terms by choosing  $k = n^{5/11}(\log (n/k^3))^{6/11}/m^{4/11}$. 
This is meaningful when $k\ge 1$, which holds when $m \le n^{5/4}\log^{3/2}n$, 
which is indeed the interesting range. For larger values of $m$,
we take $k=2$ and get the bound $O(m^{2/3}n^{2/3} + m + n^{3/2}\log n)$,
which is dominated by $O(m^{2/3}n^{2/3} + m)$. The bound then becomes (see \cite{ANPPSS}) 
\[
O\left( m^{2/3}n^{2/3} + m^{6/11}n^{9/11}\log^{2/11} (m^3/n) + m + n \right).
\]
Note that the bound is meaningful only for $m > n^{1/3}$. For smaller values of $m$,
the bound becomes $O(n)$. The logarithmic factor provides a `smooth' transition from the
above bound to the linear bound as $m\downarrow n^{1/3}$.

\subsection{Circle-lens incidence bounds}

We can apply the bounds in Theorem~\ref{krich52-lens} to obtain an upper bound
on the number of incidences between $m$ pairwise non-overlapping lenses 
and $n$ circles, where a lens $\lambda$ is said to be incident to a circle 
$c$ if $c$ participates in $\lambda$. To do so, let $\Lambda$ be the given
set of $m$ lenses (which are not necessarily rich). Set 
${\displaystyle k:= \frac{n^{3/5}\log^{2/5}(n/k^3)}{m^{2/5}}}$. (Note that 
$k = \Omega(1)$ since we always have $m = O(n^{3/2}\log n)$~\cite{ANPPSS,MT}.)
The $k$-poor lenses of $\Lambda$ contribute at most 
$km = m^{3/5}n^{3/5}\log^{2/5}(n/k^3)$ incidences. The $k$-rich lenses
contribute, by Theorem~\ref{krich52-lens},
\[
O\left( \frac{n^{3/2}\log (n/k^3)}{k^{3/2}} + n \right) =
O\left( m^{3/5}n^{3/5}\log^{2/5}(n/k^3) + n \right)
\]
incidences. Since $\log(n/k^3) = O(\log (m^3/n^2))$, we thus obtain:

\begin{theorem} \label{lens-circ}
Let $\Lambda$ be a family of $m$ pairwise non-overlapping lenses in an 
arrangement of $n$ circles in the plane. Then the number of 
incidences between the lenses of $\Lambda$ and the circles of $C$ is 
${\displaystyle O\left( m^{3/5}n^{3/5}\log^{2/5}(m^3/n^2) + n \right)}$.
\end{theorem}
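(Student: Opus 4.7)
The plan is to apply a standard threshold decomposition of $\Lambda$ by lens degree, bounding the contributions of poor and rich lenses separately, and then optimizing the threshold. This is essentially the ``Sz\'ekely-style'' approach used in the point-circle incidence derivation just above, but since we are counting participations of circles in lenses rather than edges of a crossing graph, the poor-lens bound becomes completely elementary and no crossing-number inequality is needed.

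Concretely, I would fix a parameter $k\geq 2$ (to be chosen at the end) and write $\Lambda = \Lambda_{\rm poor}\sqcup\Lambda_{\rm rich}$, where $\Lambda_{\rm rich}$ consists of those $\lambda\in\Lambda$ of degree at least $k$. A lens $\lambda$ is incident to a circle $c$ precisely when $c$ participates in $\lambda$, so the total incidence count equals $\deg(\Lambda_{\rm poor}) + \deg(\Lambda_{\rm rich})$. Each of the at most $m$ poor lenses has degree $<k$, so $\deg(\Lambda_{\rm poor}) < km$. Since the rich lenses are still pairwise non-overlapping, Theorem \ref{krich52-lens} applies and gives
\[
\deg(\Lambda_{\rm rich}) \;=\; O\!\left(\frac{n^{3/2}\log(n/k^3)}{k^{3/2}} + n\right).
\]

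Next I would balance the two dominant terms $km$ and $n^{3/2}\log(n/k^3)/k^{3/2}$. Treating $\log(n/k^3)$ momentarily as a parameter and solving $k^{5/2}m = n^{3/2}\log(n/k^3)$ suggests the implicit choice $k = n^{3/5}\log^{2/5}(n/k^3)/m^{2/5}$, and with this choice both terms become $O(m^{3/5}n^{3/5}\log^{2/5}(n/k^3))$. The overall bound is therefore $O(m^{3/5}n^{3/5}\log^{2/5}(n/k^3) + n)$. Finally, substituting the chosen $k$ shows that $n/k^3$ and $m^3/n^2$ agree up to polynomial factors in $\log(n/k^3)$, so their logarithms differ only by a multiplicative constant, allowing the cleaner form $\log^{2/5}(m^3/n^2)$.

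The only subtlety is the implicit nature of $k$ (it appears inside the logarithm it defines) and the need for $k\geq 2$ to make the decomposition meaningful; both are easily resolved. For the first, one can solve iteratively or simply verify that any $k$ within a constant factor of the implicit solution yields the same asymptotic bound. For the second, the Marcus-Tardos bound $m = O(n^{3/2}\log n)$ from \cite{ANPPSS,MT} implies $n^{3/5}/m^{2/5} = \Omega(1)$ (up to log factors), guaranteeing $k = \Omega(1)$ in the nontrivial range; if $m$ is so small that the formal $k$ falls below $2$, then $km$ is already $O(n)$ and the bound holds trivially. So there is no real obstacle here, just a routine verification.
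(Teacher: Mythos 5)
Your proposal is correct and follows essentially the same argument as the paper: decompose $\Lambda$ into $k$-poor and $k$-rich lenses, bound the poor contribution trivially by $km$, bound the rich contribution by the degree estimate in Theorem~\ref{krich52-lens}, and balance with the implicit choice $k = n^{3/5}\log^{2/5}(n/k^3)/m^{2/5}$, finally rewriting $\log(n/k^3)$ as $\Theta(\log(m^3/n^2))$. Your handling of the small-$k$ edge case is slightly more explicit than the paper's, but the substance is identical.
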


\medskip
\noindent{\bf Remark.}
Aside from the $\log$ factor, this bound generalizes the recent result of Sharir and Zlydenko~\cite{ShZl}
(see also Sharir, Solomon, and Zlydenko \cite{ShSoZl}) on incidences between so-called 
directed points and circles. A directed point is a pair $(p,u)$ 
where $p$ is a point in the plane and $u$ is a direction, and 
$(p,u)$ is incident to a circle $c$ if $p\in c$ and $u$ is the 
direction of the tangent to $c$ at $p$. The bound in \cite{ShSoZl,ShZl} 
is $O(m^{3/5}n^{3/5}+m+n)$ which is similar, albeit slightly sharper, than the bound in
Theorem~\ref{lens-circ}. The two setups are indeed related, as a
directed point of degree at least two is a limiting case of a lens, and the resulting 
infinitesimal limit lenses are clearly pairwise non-overlapping.
The novelty in Theorem~\ref{lens-circ} is that lenses are 
$4$-parameterizable, that is, each lens is specified by four 
real parameters (the coordinates of its vertices $p$, $q$), whereas
directed points are $3$-parameterizable. This makes the analysis
in \cite{ShSoZl,ShZl} inapplicable to the case of lenses, and yet
the bound is more or less preserved.

\section{Discussion} \label{sec:disc}

Each of the two proofs of the main result, given in 
Sections~\ref{sec:first} and \ref{sec:second}, can be extended
to more general contexts, provided that certain key properties
can be established, or alternatively are assumed. In this section we
discuss such possible extensions, and then summarize the state of
affairs developed in this paper.

\subparagraph*{First proof.}

We offer a few informal comments on a possible approach to extending Theorem \ref{krich52-lens} 
to more general plane curves. First, we need to assume that the curves in our
family $C$ are $3$-parameterizable, so that we can represent them as points
in a dual $3$-space, and also that they are algebraic of some constant degree. 
Each point $p\in\reals^2$ then becomes a two-dimensional surface $p^*$, consisting 
of the points in $\reals^3$ whose corresponding curves contain $p$. Then a lens 
with endpoints $p$, $q$ becomes the curve $\ell_{p,q} = p^*\cap q^*$ (we ignore in 
this informal discussion various issues involving degeneracies and various assumptions 
that one might need to impose).

We can then apply the same partitioning argument. Inside each cell, we use the (slightly weaker) 
bound $O(n^{3/2}{\rm polylog}(n))$, due to Sharir and Zahl~\cite{SZ}, 
on the number of lenses formed by a set of bounded-degree algebraic curves.

The main difference is in handling points and curves that lie on the zero set
of the partitioning polynomial. The preceding analysis strongly relied on 
Lemma \ref{no3coplanar}, which requires that the curves in $C$ be circles. 
This in turn allowed us to control the number of incidences occurring on the 
zero-set $Z(f)$ of the partitioning polynomial. With an analogue of Lemma \ref{no3coplanar} 
for more general curves, it seems plausible that the rest of the argument will work with standard modifications. 

\subparagraph*{Second proof.}

We remark that Lemma \ref{lem:recurrenceForF} holds with almost no modification if 
the circles in $C$ are replaced by arbitrary (bounded degree, algebraic) curves. 
Indeed, the only important difference is that the Order Reversal Property might 
not be true, but the dichotomy that a lens must either be preserved within a cell 
or disrupted by $Z(f)$ remains true, and the bound on the number of lenses that are 
disrupted by $Z(f)$ and not preserved by $Z(g)$ also remains true. Thus the only 
obstruction to extending Theorem \ref{krich52-lens} to more general curves is that 
the estimate $F(n,n^{1/3})=O(n)$ (or, more precisely, $F(nz,n^{1/3})=O(nz^2)$ 
for $z>1$), which serves as the base case of the induction, 
might not be true. We conjecture that for other classes of curves, an estimate of 
the form $F(n,n^b)=O(n)$ should hold (where $b>0$ depends on the class of curves). 
As $b$ becomes larger, the corresponding analogue of Theorem~\ref{lem:recurrenceForF} 
becomes weaker.

\end{document}